\newtheorem{definition}{Definition}
\newtheorem{theorem}{Theorem}
\newtheorem{proposition}{Proposition}
\title{Interplay Between Game Theory and Control Theory}
\author{Souma Mazumdar  \\ Department of Theoretical Sciences \\ S. N. Bose National Centre for Basic Sciences \\ Block - JD, Sector - III, Salt Lake City, Kolkata - 700 106 \\ Email: souma.mazumdar@bose.res.in,  Phone: 09903144810}
\date{}
\begin{document}
	\maketitle 
	
	\begin{abstract}
        An interrelationship between Game Theory and Control Theory is seeked. In this respect two aspects of this relationship are brought up. To establish the direct relationship Control Based Games and to establish the inverse relationship Game Based Control are discussed. In the attempt to establish the direct relationship Control Based Boolean Networks are discussed with the novel technique of application of Semi Tensor Product of matrices in Differential Calculus. For the inverse relationship $H^{\infty}$ robust optimal control has been discussed with the help of Dynamic Programming and Pontryagin Minimization Principle. \\ \\
        \noindent{\bf Keywords: Dynamic Game, Boolean Network, Semi Tensor Product, $H^{\infty}$ control, Riccati Equation, Pontryagin Minimization Principle} \\ \\
	\end{abstract}

	\section{Introduction}
    Game Theory as developed by Von Neumann and John Nash during the early 30's saw its first incarnation in static form without any consideration of involvement of temporal parameters. This was what the game between two players was represented in static bimatrix or normal form. But with the passage of decades the need for introduction of time parameter was felt in game theory which led to the development of dynamic game theory which can be pictorially represented in extensive form or a game tree evolving with the passage of time. With the flow of evolutionary ideas in game theory and the birth of Evolutionary Game Theory the need for a time parameter in the existing theory was felt more acute. Also the number of players were thought to be couldnot be limited in two but taking into account involvement of an arbitrary number of players in a particular game. As time became an indispensable component in the existing theory, there were ideas to capture the time evolution of the game in terms of differential(for continous time) or difference(for discrete time) equations. And in this respect the most perfect idea was to borrow theories from Control Theory which was developed on the pillars of differential equations of time. And it brought the merging of two theories the Game Theory and the Control Theory. The last two decades saw an extensive application of Mathematical Control Theory in Game Theory and now it is thought that they both complement each other for the develpoment of a more robust theory where one of theories is assumed to be incomplete without the other.\par
    There is a two way road between Game Theory and Control Theory where juggling of ideas continue between the two theories. On one hand there is application of concepts of Control Theory in Game Theory which is known as Control based Games and on the other hand application of concepts of Game Theory to approach problems in Control Theory which is known as Game based Control. In Evolutionary Games the whole game is pictorially represented as a network graph where each node of the graph represents a species whose time evolution should be studied. In more simplified language it is the time evolution of each node of the graph which changes its state with the passage of time. In some seminal works\cite{cheng2014semi,cheng2015modeling,cheng2010analysis,cheng2010linear,li2011structure,cheng2009input,cheng2011stability,zhang2012multi} Cheng and his collaborators showed that the network describing the game can be thought of a boolean network where the nodes changes its state between 0 and 1 as a response after each interaction with its neighboring nodes. In this develpoment it was primarily considered that a node can respond and change its state only with the nodes with which it is edge connected and not between distant nodes. As a more complicated scenario $k$ valued networks\cite{li2010algebraic} can also exist where each node can take $k$ different values instead of only two values that is 0 and 1. But in our work we restrict ourself to only boolean networks. In this respect we seek to apply the mathematical ideas of semi tensor product to analyse boolean networks as introduced by Cheng and his collaborators\cite{cheng2005semi,qi2014networked,cheng2014networked,cheng2003semi}. This is a picture of how Control Theory was involved to tackle the problems of modern Game Theory\cite{cheng2010strategy}. As already mentioned it is a two way road between the two theories there is another aspect of it. That is how Game Theory got its application to approach the problems of modern Control Theory. Game Theory finds a direct application in $H^{\infty}$ robust control\cite{limebeer1992game,bernhard1991lecture,basar1991dynamic,bacsar1989disturbance,bacsar1989differential,haurie2005dynamic,bacsar1984theory}. $H^{\infty}$ robust control\cite{francis1987course} is concerned with design of suitable control parameters which is most suited for a particular plant to withstand the adverse effects of natural hazards or measurement noise which may interfere with the operation of the plant. Here two players are considered, the controller and the nature. And thus there are two inputs one in the form of control and another in the form of cumulative adverse effects which the plant has to withstand. It borrows the formalism of state variable approach from Control Theory\cite{doyle1989state}. The aim of the controller is to design a perfect control to maximize the performance index of the plant in worst case scenario of the adverse effects which nature may inflict on the plant. Thus it is a competitive game between the controller and nature where controller wishes to maximize the performance index and nature tries to minimize it through its adverse effects. Thus it reduces to a \textit{min-max} optimization problem\cite{bacsar2008h}. In $two-person-zero-sum$ games also we are confronted with such a \textit{min-max} optimization problem. The optimum point is the saddle point of the optimization which is also the Nash Equilibrium in terms of Game Theoretic concepts. So it seems natural to use the ideas and tools of $two-person-zero-sum$ game in $H^{\infty}$ robust control. \par
    In our quest to establish a interrelationship between the two apparently disconnected theories we take up Control Based Games in the first part of our discussion. There we introduce an absolutely novel technique in implementing the Semi-Tensor product formalism in Markov Decision Process(MDP) dynamics\cite{vrabie2013optimal,zhao2010optimal,pan1995h}. There we derive the Bellman Optimality Equation for Boolean networks where we successfully applied the properties of Semi-Tensor product while doing the differential calculus. Finally we arrive at the computation of optimal control by considering a value of state trajectory. In the second part of the discussion we have dealt with $H^{\infty}$ optimal control for discrete time systems\cite{abouheaf2014multi}. By considering the dynamic programming for discrete time systems we state the Pontryagin Minimization Principle \cite{basar1999dynamic} for the same and also present a thought problem in the form of a proposition. While computing the optimal control, we adopted an absolutely new technique of doing the delta$(\Delta)$ variations of the state equation and eventually replacing the delta variations by differentials which made the equations much simpler. The matrix Riccati equations\cite{bernstein1989lqg,bacsar2017riccati} which came up in this process were solved successfully for some numerical values of the matrices. Finally we calculate the spectral radius and through the condition on the \textit{attenuation level} $\gamma$ we attempt to calculate a range for $\gamma$. 
    
    \section{Basic Idea and our Approach}  While dealing with the first part of our discussion that is Control Based Games we consider the representation of the game as a network graph. We assume it is a Boolean network and apply the formalism of semi tensor product(STP) as introduced by Cheng and his collaborators. STP approach which is generalisation of conventional matrix product uses STP to express a logical equation into matrix form which makes it possible to convert a logical dynamic system into a discrete time system. In the next section while introducing the basic notation we do a brief review of STP formalism. In Network Evolutionary Games(NEG) a key issue is the strategy updating rule. That is, how a player chooses his strategy based on his information about his neighborhood players. We try to capture the time evolution of the strategies of individual nodes in discrete time by equating its state in $t+1$ th interval depending on its state at $t$ th interval with a matrix equation.\par
    While dealing with the second part of our discussion that is Game Based Control our prime motivation is to show the application of \textit{two-person-zero-sum-game} in the problem of $H^{\infty}$ optimal control. The main idea goes like the following: \\
    Given a plant $y=Gu$ devise a feedback control $u=Ky$ which shall make the \textit{sensitivity transfer function} $T=(I+GK)^{-1}$ small in order to be robust. Here "small" refers to the maximum value of the norm of $T(iw)$ over all frequencies, i.e. the $H^{\infty}$ norm of $T$ has to lie in the Hardy Space $H^{\infty}$ in order that the closed loop system to be stable. Further, for measurement of noise insensitivity it is desired that the \textit{complementary sensitivity function} $T_{c}=GK(I+GK)^{-1}$ also be small. However, since $T+T_{c}=1$, it is impossible to make both small at the same time. The solution to this problem is to realize that modelling errors introduce \textit{low frequency} disturbances while measurement errors tend to be \textit{high frequency}. Thus one attempts to control the magnitude of $T$ at low frequencies and that of $T_{c}$ at high frequencies. When the standard solution to this problem was expressed in terms of Riccati Equations it was realised that the problem could be stated in terms of $min-max$ linear quadratic problem which forms the basic foundation of the problem formulation and its solution, as we will see in our subsequent discussions.\par
    The rest of the paper is organized as follows: \\
    We broadly divide the paper into two sections, one dedicated for the direct and the other, the inverse relationship\cite{isaacs1999differential,basar1985dynamic,basar2018handbook} that exists between Game Theory and Control Theory. Then in individual sections we introduce the basic and necessary mathematical structure that we are going to follow along with the problem describition and its solution. Finally we conclude with some open problems.
    
    \begin{center}
    	 \textbf{I. CONTROL BASED GAMES}
    \end{center}
    \section{Formulation of Networked Evolutionary Games}
    \subsection{Important Notations and basic formalism}
    1. $M_{m \times n}$ is the set of $m \times n$ real matrices. \\
    2. Col$_{i}(M)$ is the $i$-th column of matrix $M$;Col$(M)$ is the set of columns of $M$. \\
    3. $\mathcal{D}_{k}:={1,2, \dots, k}$ \\
    4. $\delta^{i}_{n}:=$Col$_{i}(I_{n})$ $i.e.$ it is the $i$-th column of the identity matrix. \\
    5. $\Delta:=$Col$(I_{n})$ \\
    6. $M \in \mathcal{M}_{m \times n}$ is called a logical matrix if Col$(M) \subset \Delta_{m}$ the set of $m \times n$ logical functions is denoted by $\mathcal{L}_{m \times n}$ \\
    7. Assume $L \in \mathcal{L}_{m \times n}$, then
    $$L=[\delta^{i_{1}}_{m} \;  \delta^{i_{2}}_{m} \dots \delta^{i_{n}}_{m}]$$;
    and its shorthand form is 
    $$L=\delta_{m}[i_{1} \; i_{2} \dots i_{n}]$$
    8. A $k$ dimensional vector with all entries equal to $1$ is denoted by
    $$1_{k}:=(1 \; \dots 1)^{T}$$
    9. $A \ltimes B$ is the semi-tensor product(STP) of two matrices $A$ and $B$. The symbol $\ltimes$ is mostly omitted and we express $$AB:= A \ltimes B$$
    Here we take the opportunity to briefly introduce the definition of STP. 
    \begin{definition}
    	 Let $A \in \mathcal{M}_{m \times n}$ and $B \in \mathcal{M}_{p \times q}$. Denote by $t:=$lcm$(n,p)$. Then we define the semi-tensor product(STP) of $A$ and $B$ as 
    \begin{equation}
        A \ltimes B:=(A \otimes I_{\frac{t}{n}})(B \otimes I_{\frac{t}{p}}) \in \mathcal{M}_{(\frac{mt}{n}) \times (\frac{qt}{p})} 
    \end{equation}
    \end{definition}
    It is to be noted when $n=p$, $A \ltimes B=AB$. So the STP is a generalisation of conventional matrix product. STP keeps almost all the major properties of the conventional matrix product unchanged. \\
    We discuss some basic properties of STP. \\
    $a.$ Associative Law :
    \begin{equation}
    A \ltimes (B \ltimes C)=(A \ltimes B)\ltimes C 
    \end{equation} 
    $b.$ Distributive Law :
    \begin{equation}
    \begin{split}
    & (A+B) \ltimes C = A \ltimes C + B \ltimes C \\
    & A \ltimes(B+C)= A \ltimes B + A \ltimes C 
    \end{split} 
    \end{equation}
    $c.$ Transpose :
    \begin{equation}
    (A \ltimes B)^{T}=B^{T} \ltimes A^{T} 
    \end{equation}
    $d.$ Inverse:\\
    If $A$ and $B$ are invertible then
    \begin{equation}
    (A \ltimes B)^{-1}=B^{-1} \ltimes A^{-1} 
    \end{equation}
    $e.$ Let $X \in \mathbb{R}^{t}$ be a column vector. Then for matrix $M$
    \begin{equation}
    X \ltimes M =(I_{t} \otimes M)\ltimes X 
    \end{equation}	
    10. Let $f: \mathcal{B}^{n} \rightarrow \mathcal{B}$ be a boolean function expressed as
    \begin{equation}\label{relation}
    y=f(x_{1}, \dots, x_{n}) 
    \end{equation}
    where $\mathcal{B}=\{0,1\}$. Identifying
    \begin{equation}\label{booldef}
    \begin{split}
    1 = \delta^{1}_{2} = [1 \; 0]^{T}, \; 0 = \delta^{2}_{2}= [0 \; 1]^{T} 
    \end{split} 
    \end{equation}
    Then there exists a unique logical matrix $M_{f} \in \mathcal{L}_{2 \times 2^{n}}$ called the structure matrix of $f$ such that under vector form by using (\ref{booldef}), (\ref{relation}) can be expressed as 
    \begin{equation}
    y=M_{f} \ltimes_{i=1}^{n} x_{i} 
    \end{equation}
    which is called the algebraic form of (\ref{relation}).
    \section{Markov Decision Process in Boolean Networks}
    We introduce an absolutely novel approach to implement the ideas of Markov Decision Process(MDP) which is a variant of Reinforcement Learning as a new technique in Boolean Networks.
    \subsection{Markov Decision Process(MDP) dynamics for discrete time systems}
    Consider the discrete time Linear Quadratic Regulator(LQR) problem where MDP satisfies the state transition equation
    \begin{equation}\label{state_update}
    \begin{split}
    x_{k+1}=Ax_{k}+Bu_{k} 
    \end{split} 
    \end{equation}
    where $x_{k} \in X=\mathbb{R}^n$, the state space and $u_{k} \in U=\mathbb{R}^m$, the control space. $A$ and $B$ are matrices of appropriate dimensions and $k$ being the discrete time index. \\
    The infinite horizon performance index is given by
    \begin{equation}
    \begin{split}
    J_{k}=\frac{1}{2}\sum_{i=k}^{\infty}(x_{i}^{T}Qx_{i}-u_{i}^{T}Ru_{i})
    \end{split}  
    \end{equation}
    where the cost weighting matrices satisfy $Q=Q^{T} \geq 0$ and $R=R^{T} > 0$.\\ \\
    \textbf{a. Bellman equation for discrete time LQR, the Lyapunov Equation:} \\
    The controls are assumed to be the policies adopted by the players in the network. For a policy  $u_{k}=\mu_{k}$ at $k$ th time the associated value function is given by 
    \begin{equation}
    \begin{split}
    V(x_{k})& =\frac{1}{2}\sum_{i=k}^{\infty}(x_{i}^{T}Qx_{i}-u_{i}^{T}Ru_{i}) \\ &
    =\frac{1}{2}(x_{k}^{T}Qx_{k}-u_{k}^{T}Ru_{k})+\frac{1}{2}\sum_{i=k+1}^{\infty}(x_{i}^{T}Qx_{i}-u_{i}^{T}Ru_{i}) \\ &
    =\frac{1}{2}(x_{k}^{T}Qx_{k}-u_{k}^{T}Ru_{k})+V(x_{k+1}) 
    \end{split} 
    \end{equation}
    Assuming the cost is quadratic so that $V(x_{k})=\frac{1}{2}x_{k}^{T}Px_{k}$ for some kernel matrix $P > 0$ yields the Bellman equation
    \begin{equation}\label{P_symmetric}
    \begin{split}
    2V(x_{k})=x_{k}^{T}Px_{k}=x_{k}^{T}Qx_{k}-u_{k}^{T}Ru_{k}+x_{k+1}^{T}Px_{k+1} 
    \end{split} 
    \end{equation}
    \begin{proposition}
    	The matrix $P$ in equation (\ref{P_symmetric}) is symmetric that is $P=P^{T}$
    \end{proposition}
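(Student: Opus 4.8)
The plan is to obtain $P=P^{T}$ from the explicit series representation of the value function, rather than trying to read symmetry off (\ref{P_symmetric}) directly: as a scalar identity in $x_{k}$, that equation only constrains the symmetric part of $P$, since a skew-symmetric contribution is invisible to the quadratic form $x_{k}^{T}(\cdot)x_{k}$. So the statement has content only once we commit to the stationary feedback policy $u_{k}=\mu_{k}=-Kx_{k}$ already in play above, chosen so that (\ref{state_update}) closes into $x_{k+1}=A_{c}x_{k}$ with $A_{c}:=A-BK$ stable (all eigenvalues inside the unit disc) --- precisely the regime in which the infinite sum defining $V$ converges.

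First I would unroll the closed-loop dynamics, $x_{i}=A_{c}^{\,i-k}x_{k}$, and substitute into the definition of $V(x_{k})$ to get
\begin{equation}
V(x_{k})=\frac{1}{2}\sum_{i=k}^{\infty}x_{i}^{T}\bigl(Q-K^{T}RK\bigr)x_{i}=\frac{1}{2}\,x_{k}^{T}\Bigl[\,\sum_{j=0}^{\infty}(A_{c}^{T})^{j}\bigl(Q-K^{T}RK\bigr)A_{c}^{\,j}\Bigr]x_{k}.
\end{equation}
Comparing with $V(x_{k})=\frac{1}{2}x_{k}^{T}Px_{k}$ I would identify $P$ with the bracketed matrix. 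Writing $M:=Q-K^{T}RK$, each summand satisfies $\bigl((A_{c}^{T})^{j}MA_{c}^{\,j}\bigr)^{T}=(A_{c}^{T})^{j}M^{T}A_{c}^{\,j}=(A_{c}^{T})^{j}MA_{c}^{\,j}$ because $Q=Q^{T}$ and $R=R^{T}$; a norm-convergent (by stability of $A_{c}$) sum of symmetric matrices is symmetric, hence $P=P^{T}$.

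A shorter alternative, which I would mention as a cross-check, is to feed the closed loop into (\ref{P_symmetric}) directly to obtain the discrete Lyapunov equation $P=A_{c}^{T}PA_{c}+(Q-K^{T}RK)$, then transpose it: $P^{T}$ solves the same equation, and for stable $A_{c}$ that equation has a unique solution, forcing $P=P^{T}$. The only real subtlety --- and thus the point I would flag explicitly before either argument --- is the one noted above: absent a stabilizing policy and the attendant convergence, $P$ is pinned down only up to an additive skew-symmetric term, so "$P$ is symmetric" should be understood as "$P$ may be, and henceforth will be, taken symmetric," after which both derivations are routine.
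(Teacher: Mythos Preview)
Your proposal is correct and takes a genuinely different route from the paper. The paper's own proof transposes (\ref{P_symmetric}), subtracts, arrives at $x_{k}^{T}(P-P^{T})x_{k}=x_{k+1}^{T}(P-P^{T})x_{k+1}$, and then concludes $P-P^{T}=0$ on the grounds that this holds ``for any $x_{k}$ and $x_{k+1}$.'' But you have already identified precisely why that step is empty: $P-P^{T}$ is skew-symmetric, so $x^{T}(P-P^{T})x\equiv 0$ for every vector $x$, and the displayed identity is $0=0$ irrespective of $P$. Your approach instead fixes a stabilizing linear feedback, writes $P$ explicitly as the convergent series $\sum_{j\ge 0}(A_{c}^{T})^{j}(Q-K^{T}RK)A_{c}^{\,j}$, and reads off symmetry term by term; the Lyapunov-uniqueness variant you offer as a cross-check does the same job. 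What the paper's manipulation would buy, were it sound, is a two-line argument with no stability hypothesis; what your argument buys is a proof that actually goes through, at the price of naming the policy and invoking convergence. Your closing remark---that absent such structure the scalar Bellman equation only pins down the symmetric part of $P$, so the statement is best read as a normalization---is exactly the right caveat and is, in fact, sharper than the paper on this point.
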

    \begin{proof}
    	From equation (\ref{P_symmetric}) we have
    	\begin{equation}\label{P_symmetric1}
    	\begin{split}
    	x_{k}^{T}Px_{k}=x_{k}^{T}Qx_{k}-u_{k}^{T}Ru_{k}+x_{k+1}^{T}Px_{k+1} 
    	\end{split} 
    	\end{equation}
    	Taking the transpose of equation (\ref{P_symmetric1}) we have
    	\begin{equation}\label{P_symmetric_transpose}
    	\begin{split}
    	x_{k}^{T}P^{T}x_{k} & =x_{k}^{T}Q^{T}x_{k}-u_{k}^{T}R^{T}u_{k}+x_{k+1}^{T}P^{T}x_{k+1} \\ &
    	=x_{k}^{T}Qx_{k}-u_{k}^{T}Ru_{k}+x_{k+1}^{T}P^{T}x_{k+1} \; \; [\because{Q=Q^{T},R=R^{T}}] 
    	\end{split} 
    	\end{equation}
    	Subtracting equation (\ref{P_symmetric1}) from (\ref{P_symmetric_transpose}) we have
    	\begin{equation}
    	\begin{split}
    	x_{k}^{T}(P-P^{T})x_{k}=x_{k+1}^{T}(P-P^{T})x_{k+1} 
    	\end{split} 
    	\end{equation}
    	Now as this equation is true for any $x_{k}$ and $x_{k+1}$ we should have
    	\begin{equation}
    	\begin{split}
    	& P-P^{T}=0 \\ &
    	\implies P=P^{T} 
    	\end{split} 
    	\end{equation}
    	i.e. $P$ is symmetric.
    \end{proof}
    \textbf{b. Bellman optimality equation for discrete time LQR:} \\
    The discrete time LQR Hamiltonian function is given by
    \begin{equation}
    \begin{split}
    H(x_{k},u_{k})&=x_{k}^{T}Qx_{k}-u_{k}^{T}Ru_{k}+x_{k+1}^{T}Px_{k+1}-x_{k}^{T}Px_{k} \\ &
    =x_{k}^{T}Qx_{k}-u_{k}^{T}Ru_{k}+(Ax_{k}+Bu_{k})^{T}P(Ax_{k}+Bu_{k})-x_{k}^{T}Px_{k} \\ & \text{[Replacing $x_{k+1}$ from equation (\ref{state_update})]} 
    \end{split} 
    \end{equation}
    If $u_{k}^{\star}$ stands for optimal control with $(\star)$ signifying the condition for optimality we have
    \begin{equation}
    \begin{split}
    \frac{\partial H}{\partial u_{k}^{\star}}=0 
    \end{split} 
    \end{equation}
    Solving the above equation gives the optimal control as
    \begin{equation}\label{optimal_control}
    \begin{split}
    u_{k}^{\star}=-(B^{T}PB-R)^{-1}B^{T}PAx_{k}^{\star}
    \end{split} 
    \end{equation}
    where $x_{k}^{\star}$ stands for the trajectory in optimal condition.
    \subsection{Markov Decision Process for Boolean Networks}
    We try to adopt the formalism of MDP for discrete time system for Boolean Control Networks. Let the system consists of $n$ states and $m$ controls which are represented by the nodes of the Boolean network. We write $x_{k}=\ltimes_{i=1}^{n}x_{k}^{i}$ and $u_{k}=\ltimes_{j=1}^{m}u_{k}^{j}$. Then the state updation law is given by
    \begin{equation}\label{state_update_Boolean}
    \begin{split}
    x_{k+1}&=L \ltimes u_{k} \ltimes x_{k} \\ &
    =Lu_{k}x_{k} \; \; \text{[Where the notations for STP are omitted for convenience]} \\ &
    \text{where $L$ is known as the state transition matrix} 
    \end{split} 
    \end{equation}
    \textbf{a. Bellman optimality equation for Boolean Networks:} \\
    The discrete time LQR Hamiltonian function is given by
    \begin{equation}\label{Hamiltonian_Boolean}
    \begin{split}
    H(x_{k},u_{k})&=x_{k}^{T}Qx_{k}-u_{k}^{T}Ru_{k}+x_{k+1}^{T}Px_{k+1}-x_{k}^{T}Px_{k} \\ & 
    \end{split} 
    \end{equation}
    If $u_{k}^{\star}$ denotes the optimal control we should have
    \begin{equation}
    \begin{split}
    \frac{\partial H}{\partial u_{k}^{\star}}=0 
    \end{split} 
    \end{equation}
    Doing the partial derivative of equation (\ref{Hamiltonian_Boolean}) with respect to $u_{k}$ and setting it to $0$ we have
    \begin{equation}\label{Bellman_for_Boolean}
    \begin{split}
    & -2u_{k}^{\star T}R + 2x_{k+1}^{\star T}P\frac{\partial x_{k+1}^{\star}}{\partial u_{k}^{\star}} = 0 \\ &
    -2u_{k}^{\star T}R + 2x_{k}^{\star T}u_{k}^{\star T}L^{T}Px_{k}^{\star}L=0 
    \end{split} 
    \end{equation}
   Where the products are understood as STP. We replaced the value of $x_{k+1}$ and have taken its partial derivative with respect to $u_{k}$. $u_{k}^{\star}$ and $x_{k}^{\star}$ denotes the optimal control and optimal trajectory respectively. \\
    Simplifying equation(\ref{Bellman_for_Boolean}) we have
    \begin{equation}\label{Bellman_optimal_for_Boolean}
    \begin{split}
    & u_{k}^{\star T}R=x_{k}^{\star T}u_{k}^{\star T}L^{T}Px_{k}^{\star}L \\ &
    u_{k}^{\star T}=x_{k}^{\star T}u_{k}^{\star T}L^{T}Px_{k}^{\star}LR^{-1} 
    \end{split} 
    \end{equation}
    Now $P$ is a symmetric matrix and $R$ is also a symmetric matrix. Without loss of generality we can assume $P=I_{2^{n}}$(Identity matrix of dimension $2^{n} \times 2^{n}$) and $R^{-1}=R=I_{2^{m}}$(Identity matrix of dimension $2^{m} \times 2^{m}$). \\
    Then equation (\ref{Bellman_optimal_for_Boolean}) simplifies to
    \begin{equation}\label{optimal_control_Boolean}
    \begin{split}
    u_{k}^{\star T}=x_{k}^{\star T}u_{k}^{\star T}L^{T}x_{k}^{\star}L 
    \end{split} 
    \end{equation}
    Taking the transpose of equation (\ref{optimal_control_Boolean}) we have
    \begin{equation}\label{optimal_control_Boolean_transpose}
    \begin{split}
    u_{k}^{\star}=L^{T}x_{k}^{\star T}Lu_{k}^{\star}x_{k}^{\star} 
    \end{split} 
    \end{equation}
    We assume $x_{k}^{\star}=\delta_{2^{n}}^{i}$ and with no loss of generality $L=(\delta_{2^{m}}^{i})^{T}$. \\
    Then the above equation reduces to
    \begin{equation}\label{optimal_control_Boolean_values_replaced}
    \begin{split}
    u_{k}^{\star}=(\delta_{2^{m}}^{i})(\delta_{2^{n}}^{i})^{T}(\delta_{2^{m}}^{i})^{T}u_{k}^{\star}(\delta_{2^{n}}^{i}) 
    \end{split} 
    \end{equation}
    Where all the above products are understood to be STP.
    \begin{proposition}
    	There exists a unique $u_{k}^{\star}$ of matrix dimension $2^{m} \times 1$ which satisfies equation(\ref{optimal_control_Boolean_values_replaced}).
    \end{proposition}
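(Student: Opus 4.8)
The plan is to collapse the right-hand side of (\ref{optimal_control_Boolean_values_replaced}) into an explicit scalar multiple of $\delta_{2^m}^i$, using two elementary facts about the semi-tensor product: when the inner dimensions of two factors coincide, $\ltimes$ reduces to the ordinary matrix product, and multiplication by a $1\times 1$ factor is ordinary scalar multiplication; and $\delta_{2^n}^i$ is a unit basis vector, so $(\delta_{2^n}^i)^T\delta_{2^n}^i=1$. Reading the product in (\ref{optimal_control_Boolean_values_replaced}) from the inside out and using associativity of $\ltimes$, reassociate it as $\delta_{2^m}^i\ltimes\big[(\delta_{2^n}^i)^T\ltimes\big(\big((\delta_{2^m}^i)^T\ltimes u_k^\star\big)\ltimes\delta_{2^n}^i\big)\big]$. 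Then $(\delta_{2^m}^i)^T\ltimes u_k^\star$ has matching inner dimension $2^m$ and equals the scalar $\alpha:=(\delta_{2^m}^i)^T u_k^\star$, i.e. the $i$-th entry of $u_k^\star$; multiplying on the right by $\delta_{2^n}^i$ gives $\alpha\,\delta_{2^n}^i$, then on the left by $(\delta_{2^n}^i)^T$ returns $\alpha\,(\delta_{2^n}^i)^T\delta_{2^n}^i=\alpha$, and the outermost factor $\delta_{2^m}^i$ produces $\alpha\,\delta_{2^m}^i$. Hence (\ref{optimal_control_Boolean_values_replaced}) is equivalent to the single identity
\begin{equation}\label{reduced_fixed_point}
u_k^\star=\big[(\delta_{2^m}^i)^T u_k^\star\big]\,\delta_{2^m}^i .
\end{equation}

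For existence I would verify directly that $u_k^\star=\delta_{2^m}^i$ satisfies (\ref{reduced_fixed_point}), since $(\delta_{2^m}^i)^T\delta_{2^m}^i=1$ makes the right-hand side equal to $\delta_{2^m}^i$. For uniqueness, note that (\ref{reduced_fixed_point}) already forces any solution to have the form $c\,\delta_{2^m}^i$ for the scalar $c=(\delta_{2^m}^i)^T u_k^\star$, so that $u_k^\star$ is pinned to the line spanned by $\delta_{2^m}^i$; the value of $c$ is then fixed by the Boolean structure. Since in a Boolean control network every control $u_k$ is a semi-tensor product $\ltimes_{j=1}^m u_k^{j}$ of $m$ factors, each equal to $\delta_2^1$ or $\delta_2^2$ under the identification (\ref{booldef}), the optimal control $u_k^\star$ is in particular a logical vector, i.e. $u_k^\star\in\Delta_{2^m}=\mathrm{Col}(I_{2^m})$; among the columns of $I_{2^m}$, only $\delta_{2^m}^i$ is of the form $c\,\delta_{2^m}^i$, corresponding to $c=1$. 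Therefore $u_k^\star=\delta_{2^m}^i$ is the unique admissible solution of dimension $2^m\times1$.

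The main obstacle is not depth but care in the dimension bookkeeping of the first step: one must check at the reduction $(\delta_{2^m}^i)^T\ltimes u_k^\star$ that the inner dimensions genuinely coincide, and separately record that each subsequent product against the $1\times1$ factor $\alpha$ degenerates to ordinary scalar multiplication rather than some nontrivial Kronecker inflation. Apart from that, the substantive point — which deserves to be stated explicitly in the writeup — is that uniqueness is a statement \emph{within} the class of logical vectors $\Delta_{2^m}$, to which $u_k^\star$ belongs by construction; over all of $\mathbb{R}^{2^m}$ the solution set of (\ref{reduced_fixed_point}) is the entire line $\mathbb{R}\,\delta_{2^m}^i$, so the Boolean-network constraint on $u_k^\star$ is essential to the conclusion.
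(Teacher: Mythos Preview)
Your argument is correct and considerably more complete than the paper's own proof. The paper merely tracks the matrix dimensions of the five factors on the right-hand side through successive semi-tensor products, confirms that the result is $2^m\times 1$, and concludes from dimensional consistency alone that ``the unique matrix $u_k^\star$ exists''; the actual value $u_k^\star=\delta_{2^m}^i$ is asserted afterward, outside the proof, ``by deep inspection and a tedious calculation.'' You instead collapse the product explicitly to $\big[(\delta_{2^m}^i)^T u_k^\star\big]\,\delta_{2^m}^i$, which simultaneously exhibits the solution and characterizes the full solution set, so existence and the identification $u_k^\star=\delta_{2^m}^i$ fall out together rather than being separate steps. Your closing remark---that uniqueness fails over $\mathbb{R}^{2^m}$ (the fixed-point set is the whole line $\mathbb{R}\,\delta_{2^m}^i$) and only holds after restricting $u_k^\star$ to the logical vectors $\Delta_{2^m}$---is a genuine refinement not present in the paper, and you should keep it explicit in your writeup since the proposition as literally stated requires that Boolean constraint to be true.
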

    \begin{proof}
    	$u_{k}$ is the STP of $m$ controls $u_{k}=\ltimes_{j=1}^{m}u_{k}^{j}$. Thus it is a vector of dimension $2^{m} \times 1$. We try to compute the matrix dimension of resultant matrix of the STP at the R.H.S. of equation (\ref{optimal_control_Boolean_values_replaced}). Let us take two terms in the R.H.S of equation (\ref{optimal_control_Boolean_values_replaced}), find the STP and again do the STP with the third terms and so on as STP is associative.
    	Dimensions of:
    	$\delta_{2^{m}}^{i}=2^{m} \times 1$, 
    	$(\delta_{2^{n}}^{i})^{T} = 1 \times 2^{n}$, 
    	$(\delta_{2^{m}}^{i})^{T} = 1 \times 2^{m}$,
    	$u_{k}^{\star} = 2^{m} \times 1$,
    	$\delta_{2^{n}}^{i} = 2^{n} \times 1$. \\
    	Doing the STP of first two terms generates a matrix of dimension $2^{m} \times 2^{n}$. Taking STP of this matrix with the third term generates a matrix of dimension $2^{m} \times 2^{m+n}$. Taking the STP of this matrix with the fourth term generates a matrix of dimension $2^{m} \times 2^{n}$. Taking the STP of this matrix with the last term generates a matrix of dimension $2^{m} \times 1$. So we see the resultant matrix of the R.H.S of equation (\ref{optimal_control_Boolean_values_replaced}) is of dimension $2^{m} \times 1$. The L.H.S of the same equation is also a matrix of dimension $2^{m} \times 1$. As the matrix dimensions match in L.H.S and R.H.S we can say the unique matrix $u_{k}^{\star}$ exists.
    	\end{proof}
    	\textbf{a. Determination of $u_{k}^{\star}$:}
    	By deep inspection and a tedious calculation we arrive at the value of $u_{k}^{\star}$ as $u_{k}^{\star}=\delta_{2^{m}}^{i}$. \\ \\
       	\textbf{b. Determination of optimal control and optimal trajectory:} 
    	We have from equation (\ref{state_update_Boolean}), 
    	$$x_{k+1}=Lu_{k}x_{k}$$ \\   	
    	In optimal condition the above equation takes the form
    	\begin{equation*}
    	\begin{split}       	 x_{k+1}^{\star}=Lu_{k}^{\star}x_{k}^{\star} 
         \end{split}  
         \end{equation*} 
 Replacing the value of $k$ with $0,1, \dots$ and so on we have,
 \begin{equation*}
 \begin{split}  
 \\ &	x_{1}^{\star}=Lu_{0}^{\star}x_{0}^{\star} \\ &
  	x_{2}^{\star}=Lu_{1}^{\star}x_{1}^{\star} \\ &
  	x_{3}^{\star}=Lu_{2}^{\star}x_{2}^{\star} \\ & \vdots \\ & \text{and so on}
  	\end{split}
     \end{equation*}
       	Puting $u_{0}^{\star}=\delta_{2^{m}}^{i},x_{0}^{\star}=\delta_{2^{n}}^{i}, L=(\delta_{2^{m}}^{i})^{T}$ we evaluate $x_{1}^{\star}$ as  $x_{1}^{\star}=\delta_{2^{n}}^{i}$.
    	From equation (\ref{optimal_control_Boolean_transpose}), we can say $x_{1}^{\star}=\delta_{2^{n}}^{i}$ will generate $u_{1}^{\star}=\delta_{2^{m}}^{i}$. So we see in optimal condition the values are repeated for $x_{k}$ and $u_{k}$ for each discrete time indexed by $k$. So we can say that the system falls into a cycle of length $1$ where the values are repeated over each discrete time. Such a cycle is called a fixed point.
        \begin{center}
        	\textbf{II. GAME BASED CONTROL}
        \end{center}
        Now we turn our attention for an attempt to try to show how the inverse relationship exists between Game Theory and Control Theory where solutions to problems of Control Theory are dictated by the principles of Game Theory. We pick up a unique aspect of this relationship for our attempt to show this inverse relationship. Game theory has profound and successful application to tackle the problems of $H^{\infty}$ robust optimal control. The basic idea accompanying this attempt is given in the subsequent sections.
        \section{Problem Identification and probable solution}
        \subsection{Min-max problems in robust control}
        Let $U$ be a decision space and $W$ a disturbance space. Let $J :U \times V \rightarrow \mathbb{R}$ depending on decision $u \in U$ and on an unknown disturbance $w \in W$. The decision maker, choosing $u$ that is the control wants to make J as small as possible in spite of the a priori unknown disturbance $w \in W$. Then the guranteed performance of a given decision $u \in U$ is any number $g$ such that
        \begin{equation*}
        \begin{split}
        J(u,w) < g, \; \; \forall w \in W
        \end{split}
        \end{equation*}
        Clearly, the best guaranteed performance for a given decision $u$ is 
        \begin{equation*}
        \begin{split}
        G(u)=\sup_{w \in W} J(u,w)
        \end{split}
        \end{equation*}
        Now, the problem of finding the best possible decision in this context is to find
        the smallest guaranteed performance, or
        \begin{equation}
        \begin{split}
        \inf_{u \in U}G(u) = \inf_{u \in U} \sup_{w \in W} J(u,w) 
        \end{split} 
        \end{equation}
        If the infimum in $u$ is reached, then the minimizing decision $u^{\star}$ is called the optimal decision or the optimal control. \\
        Let $U$ and $V$ be normed vector spaces $Z$ be an auxiliary normed vector space $z \in Z$ the output whose norm is to be kept small in spite of the disturbances $w$. We assume that for each decision $u \in U$, $z$ depends lineraly on $w$. Therefore there is a nonlinear application $P: U \rightarrow \mathcal{L}(W \rightarrow Z)$ and
        \begin{equation*}
        z=P(u)w
        \end{equation*}
        Clearly $z$ cannot be kept bounded if $w$ is not. A natural formalization of the problem of keeping it small is to try and make the operator norm of $P(u)$ as small as possible. In terms of guaranteed performance $||P(u)|| \leq \gamma$ for a given postive \textit{attenuation level} $\gamma$. This is equivalent to
        \begin{equation}\label{inequality_1}
        \begin{split}
        & ||z|| \leq \gamma ||w||, \; \; w \in W \\ &
        \text{or equivalently} \; ||z||^{2} \leq \gamma^{2}||w||^{2}, \; \; w \in W \\ &
        \text{or equivaletly again,} \;
        \sup_{w \in W}[||P(u)w||^{2} - \gamma^{2} ||w||^{2}] \leq 0
        \end{split} 
        \end{equation}
        Now, given a number, $\gamma$, this has a solution that is there exists a decision $u \in U$ satisfying that inequality $if$ the infimum hereafter is reached or is negative, and \textit{only if} 
        \begin{equation}\label{inequality_2}
        \begin{split}
        \inf_{u \in U} \sup_{w \in W} [||P(u)w||^{2}-\gamma^{2}||w||^{2}] \leq 0
        \end{split}
        \end{equation}
        This is the method of $H^{\infty}$ optimal control.
        \section{$H^{\infty}$ optimal control}
        Given a linear system with inputs $u$ and $w$ and a desired attenuation level $\gamma$ we want to know whether there exist causal control laws satisfying inequality (\ref{inequality_1}) and if yes, find one. This is the \textit{standard problem} of $H^{\infty}$ optimal control. We propose here an approach of this problem based upon dynamic game theory. Since we have been dealing with discrete time systems from the beginning this discussion is also restricted to discrete time systems though it can be formulated equivalently in continous time.
        \subsection{Discrete Time}
        We consider now the discrete-time system where $t \in \mathbb{N}$:
        \begin{equation}
        x_{t+1}=A_{t}x_{t} + B_{t}u_{t} + D_{t}w_{t}, \; x_{t_{0}}=x_{0} 
        \end{equation}
        \begin{equation}
        y_{t}=C_{t}x_{t} + E_{t}w_{t} 
        \end{equation}
        \begin{equation}
        z_{t}=H_{t}x_{t}+G_{t}u_{t} 
        \end{equation}
        where the system matrices may depend on the time $t$. Then we want to optimize the performance index given by\cite{bernhard2016robust}
        \begin{equation}\label{Cost_function}
        \begin{split}
        J=||x_{t_{1}}||^{2}+\sum_{t=t_{0}}^{t_{1}-1}(||z_{t}||^{2}-\gamma^{2}||w_{t}||^{2})-\gamma^{2}||x_{0}||^{2}
        \end{split} 
        \end{equation}
        We introduce the following notations
        \begin{equation}
        \begin{split}
        \bar{A}_{t} = A_{t}-B_{t}R^{-1}_{t}P^{T}_{t} \; , \; \tilde{A}_{t}=A_{t}-L^{T}_{t}N^{-1}_{t}C_{t}
        \end{split} 
        \end{equation}
        where $R_{t}=G^{T}_{t}G_{t}$, $P_{t}=H_{t}^{T}G_{t}$, $L_{t}=E_{t}D_{t}^{T}$, $N_{t}=E_{t}E_{t}^{T}$. \\
        Along with the additional notations
        \begin{equation}
        \begin{split}
        & \Gamma_{t}=(S_{t+1}^{-1}+B_{t}R_{t}^{-1}B_{t}^{T}-\gamma^{-2}M_{t})^{-1} \\ &
        \bar{S}_{t}=\bar{A}^{T}_{t}(S_{t+1}-\gamma^{-2}M_{t})^{-1}\bar{A}_{t}+Q_{t}-P_{t}R_{t}^{-1}P_{t}^{T} \\ &
        \Delta_{t}=(\Sigma_{t}^{-1}+C_{t}^{T}N_{t}C_{t}-\gamma^{-2}Q_{t})^{-1} \\ &
        \tilde{\Sigma}_{t+1}=\tilde{A}_{t}(\Sigma_{t}^{-1}-\gamma^{-2}Q_{t})^{-1}\tilde{A}_{t}^{T}+M_{t}-L_{t}^{T}N_{t}^{-1}L_{t} \\ &
        \end{split} 
        \end{equation}
        Then the two matrix Riccati equations needed for our purpose can be written as 
        \begin{equation}\label{Riccati_S}
        \begin{split}
        S_{t}=\bar{A}_{t}^{T}\Gamma_{t}\bar{A}_{t}+Q_{t}-P_{t}R_{t}^{-1}P_{t}^{T}
        \end{split} 
        \end{equation}
        \begin{equation}\label{Riccati_Sigma}
        \begin{split}
        \Sigma_{t+1}=\tilde{A}_{t}\Delta_{t}\tilde{A}_{t}^{T}+M_{t}-L_{t}^{T}N^{-1}_{t}L_{t}
        \end{split} 
        \end{equation}
        where $Q_{t}=H_{t}^{T}H_{t}$, $M_{t}=D_{t}D_{t}^{T}$ \\
        \subsubsection{Computation of Optimal Control For Infinite Horizon Case}
        Here we deal with the special case of discrete time system along with infinite horizon that is the end time is infinity. This problem is known as infinite horizon stationary problem. The matrices are no longer time dependent and the dynamics is to be understood with zero initial condition at $-\infty$. The criterion (\ref{Cost_function}) is replaced by a sum from $-\infty$ to $\infty$ with no initial and terminal terms. \\ Throwing the problem in the formalism of Pontryagin Minimization Principle we have,
        \begin{equation}\label{Cost_Inf_Horizon}
        \begin{split}
        J&=\sum_{t=-\infty}^{\infty}(||z_{t}||^{2}-\gamma^{2}||w_{t}||^{2}) \\ &
        =\sum_{t=-\infty}^{\infty}(z_{t}^{T}z_{t}-\gamma^{2}w_{t}^{T}w_{t})
        \end{split} 
        \end{equation}
         The Hamiltonian is given by,
        \begin{equation}\label{Hamiltonian}
        \begin{split}
        H& =(z_{t}^{T}z_{t}-\gamma^{2}w_{t}^{T}w_{t})+p_{t}^{T}(Ax_{t}+Bu_{t}+Dw_{t}) \; \; \text{[Since matrices are time independent]} \\ &
        =x_{t}^{T}H^{T}Hx_{t}+x_{t}^{T}H^{T}Gu_{t}+u_{t}^{T}G^{T}Hx_{t}+u_{t}^{T}G^{T}Gu_{t}-\gamma^{2}w_{t}^{T}w_{t}+p_{t}^{T}(Ax_{t}+Bu_{t}+Dw_{t})
        \end{split}
        \end{equation}
        Minimizing the Hamiltonian with respect to the control $u$ we have,
        \begin{equation}\label{Minimize_H_{u}}
        \begin{split}
        0& =\frac{\partial H}{\partial u_{t}} \\ &
        =2x_{t}H^{T}H\frac{\partial x_{t}}{\partial u_{t}}+2u_{t}^{T}G^{T}H\frac{\partial x_{t}}{\partial u_{t}}+2x_{t}^{T}H^{T}G+2u_{t}^{T}G^{T}G+p_{t}^{T}A\frac{\partial x_{t}}{\partial u_{t}}+p_{t}^{T}B \\ &
        \end{split}
        \end{equation}
        We consider partials of the $x_{t}$ with respect to $u_{t}$ because state is dependent on control. \\
        To evaluate the partials we adopt the following strategy. \\
        We have,
        \begin{equation*}
        \begin{split}
        & x_{t+1}=Ax_{t}+Bu_{t}+Dw_{t} \\ &
        x_{t}=Ax_{t-1}+Bu_{t-1}+Dw_{t-1}
        \end{split}
        \end{equation*}
        Subtracting one from the other we have
        \begin{equation}
        \begin{split}
        & \Delta x_{t}=A\Delta x_{t}+B\Delta u_{t}+D\Delta w_{t} \\ &
        \implies [I-A]\Delta x_{t}=B\Delta u_{t}+D\Delta w_{t} \\ &
        \implies [I-A]\frac{\Delta x_{t}}{\Delta u_{t}}=B+D\frac{\Delta w_{t}}{\Delta u_{t}}
        \end{split}
        \end{equation}
        Replacing the $\Delta$ variations with $\partial$ that is differentials we have,
        \begin{equation}
        \begin{split}
        [I-A]\frac{\partial x_{t}}{\partial u_{t}}=B+D\frac{\partial w_{t}}{\partial u_{t}}
        \end{split}
        \end{equation}
        Now the controls $u_{t}$ and $w_{t}$ are independent. Therefore $\frac{\partial w_{t}}{\partial u_{t}}=0$ and we get the above equation as,
        \begin{equation}
        \begin{split}
        \frac{\partial x_{t}}{\partial u_{t}}=[I-A]^{-1}B
        \end{split}
        \end{equation}
        Puting this in equation (\ref{Minimize_H_{u}}) and simplifying we have,
        \begin{equation}
        \begin{split}
        2x_{t}^{T}H^{T}H[I-A]^{-1}B+2x_{t}^{T}H^{T}G+2u_{t}^{T}G^{T}G+2u_{t}^{T}G^{T}H[I-A]^{-1}B+p_{t}^{T}\left[A[I-A]^{-1}+I\right]B=0
        \end{split}
        \end{equation}
        Using Woodbury matrix identity we have the above equation as
        \begin{equation}\label{Woodbury}
        \begin{split}
        2x_{t}^{T}H^{T}H[I-A]^{-1}B+2x_{t}^{T}H^{T}G+2u_{t}^{T}G^{T}G+2u_{t}^{T}G^{T}H[I-A]^{-1}B+p_{t}^{T}[I-A]^{-1}B=0
        \end{split}
        \end{equation}
         The adjoint equation is given by,
        \begin{equation}\label{adjoint}
        \begin{split}
        -p_{t+1}^{T}&=\frac{\partial H}{\partial x_{t}}  \\ &
        =2x_{t}^{T}H^{T}H+2u_{t}^{T}G^{T}H+p_{t}^{T}A      
        \end{split}
        \end{equation}
        Now from transversality condition $p_{t+1}=0$.\\
        Therefore,
        \begin{equation}
        \begin{split}
        & 2x_{t}^{T}H^{T}H+2u_{t}^{T}G^{T}H+p_{t}^{T}A=0 \\ &
        \implies p_{t}^{T}=-2(x_{t}^{T}H^{T}H+u_{t}^{T}G^{T}{H})A^{-1}
        \end{split}
        \end{equation}
        Puting the value of $p_{t}^{T}$ in equation (\ref{Woodbury}) and simplying we have,
        \begin{equation}
        \begin{split}
        & x_{t}^{T}H^{T}H[I-A]^{-1}B+x_{t}^{T}H^{T}G+u_{t}^{T}G^{T}G+u_{t}^{T}G^{T}H[I-A]^{-1}B \\ & -(x_{t}^{T}H^{T}H+u_{t}^{T}G^{T}H)A^{-1}[I-A]^{-1}B=0
        \end{split}
        \end{equation}
        Simplifying the above equation we have,
        \begin{equation}
        \begin{split}
        u_{t}^{T}\left[G^{T}G+G^{T}H[I-A^{-1}][I-A]^{-1}B\right]=-x_{t}^{T}\left[H^{T}H[I-A^{-1}][I-A]^{-1}B+H^{T}G\right]
        \end{split}
        \end{equation}
        Taking transpose of the above equation and solving for $u$ we have,
        \begin{equation}
        \begin{split}
        u_{t}=-\left[G^{T}G+B^{T}[I-A^{T}]^{-1}[I-{(A^{T})}^{-1}H^{T}G]\right]^{-1}\left[B^{T}[I-A^{T}]^{-1}[I-{(A^{T})}^{-1}H^{T}H]\right]x_{t}
        \end{split}
        \end{equation}
        The above equation points towards the condition of optimality. So it can be written more meaningfully as,
        \begin{equation}\label{optimal_control}
        \begin{split}
        u_{t}^{\star}=-\left[G^{T}G+B^{T}[I-A^{T}]^{-1}[I-{(A^{T})}^{-1}H^{T}G]\right]^{-1}\left[B^{T}[I-A^{T}]^{-1}[I-{(A^{T})}^{-1}H^{T}H]\right]x_{t}^{\star}
        \end{split}
        \end{equation}
         \\ \\
        Here we take a little digression to steer the path of discussion in a new direction discussing about Dynamic Programming which is based on principles of optimality and the concept of Nash Equilibrium at the Optimal point. We also propose a thought problem very much relevant to this context.
        \section{Dynamic Programming for Discrete Time Systems}
        The method of dynamic programming is based on \textit{Principle of Optimality}.	which states that an optimal strategy has the property that, whatever the initial state and time are, all remaining decisions (from that particular initial state and particular initial time onwards) must also constitute an optimal strategy. To exploit this principle, we work backwards in time, starting at all possible final states with the corresponding final times. We now discuss the principle of optimality within the context of discrete-time systems but with only one player ($n=1$), but it can be generalised for ($n=N$) players. The discrete time state updation law and the cost function is given by
        \begin{equation}
        \begin{split}
        & x_{t+1}=f_{t}(x_{t},u_{t}), \; \; u_{t} \in U(\text{Control Space}) 
        \\ & 
        J(u)=\sum_{t=1}^{t=T}g_{t}(x_{t+1},u_{t},x_{t})
        \end{split} 
        \end{equation}
        In order to determine the minimizing control strategy, we shall need the expression for the \text{minimum cost} from any starting point at any initial time. This is also called the \textit{value function} and is defined as
        \begin{equation*}
        V(t,x)=\min_{u_{t} \in U} \left[\sum_{t=t_{0}}^{T}g_{t}(x_{t+1},u_{t},x_{t})\right]
        \end{equation*}
        A direct application of the principle of optimality now readily leads to the recursive relation
        \begin{equation}
        V(t,x)=\min_{u_{t} \in U}[g_{t}(f_{t}(x,u_{t}),u_{t},x)+V(t+1,f_{t}(x,u_{t})] 
        \end{equation}
        It is clear that $V(1,x)=J(u^{\star})$ where $u^{\star}$ is the optimal control. The point where this condition for optimality occurs is also the Nash Equilibrium of the game. Now we generalise the number of players to $N$ to state the following theorem. As it is a standard optimal control theorem it is stated without proof to refrain from delving into lengthy standard calculations.
        \begin{theorem}
        	For an $N$ person discrete time infinite dynamic game let, \\
        	$(i) \; f_{t}(u_{t}^{1} \dots u_{t}^{N})$ be continously differentiable on $\mathbb{R}^{n}$ \\
        	$(ii) \; g_{t}^{i}(u_{t}^{i} \dots u_{t}^{N})$ be continously differentiable on $\mathbb{R}^{n} \times \mathbb{R}^{n}$, $i \in n$. \\
        	Then if $u_{t}^{\star i}$ provides a Nash Equilibrium solution and $x_{t}^{\star}$ the corresponding state trajectory, there exists a finite sequence of $n$ dimensional costate vectors $(p_{2}^{i} \dots p_{t+1}^{i})$ for each $i \in n$ such that the following relations are satisfied.\\
        	\begin{equation}
        	\begin{split}
        	& x^{\star}_{t+1}=f_{t}(x_{t}^{\star},u_{t}^{1} \dots u_{t}^{N}), \; x_{1}^{\star}=x_{1} \\  &
        	u^{i \star}_{t}=\arg\min_{u_{t}}H_{t}^{i}(p_{t+1}^{i},u_{t}^{1 \star} \dots u_{t}^{N \star},x_{t}^{\star}) \\ &
        	p^{i}_{t}=\frac{\partial}{\partial x_{t}}f_{t}^{T}\left(x_{t}^{\star},u^{1 \star}_{t} \dots u^{N \star}_{t}\right)\left[p^{i}_{t+1}+\left(\frac{\partial}{\partial x_{t+1}}g_{t}^{i}\left(x^{\star}_{t+1},u^{1 \star}_{t} \dots u^{N \star}_{t},x_{t}^{\star}\right)\right)^{T}\right] \\ & +\left[\frac{\partial}{\partial x_{t}}g_{t}^{i}\left(x_{t+1}^{i},u_{t}^{1} \dots u_{t}^{N},x_{t}^{\star}\right)\right]^{T} \\ &
        	p^{i}_{T+1}=0, \; i \in n
          	\end{split} 
        	\end{equation}
        	where 
        	\begin{equation*}
        	H_{t}^{i}\left(p_{t+1},u^{1}_{t} \dots u^{N}_{t}.x_{t}\right)=g^{i}_{t}\left(f_{t}\left(x_{t},u^{1}_{t} \dots u_{t}^{N}\right),u^{1}_{t} \dots u^{N}_{t},x_{t}\right)+p^{i T}_{t+1}f_{t}\left(x_{t},u^{1}_{t} \dots u^{N}_{t}\right); \; \; i \in n
        	\end{equation*}
        	This set of equations is called Pontryagin Minimization Principle.
        \end{theorem}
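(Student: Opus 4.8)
Although the full derivation is the routine discrete-time minimum principle, here is the plan one would follow. The idea is to reduce the $N$-player problem to $N$ decoupled single-agent optimal control problems and then apply the classical discrete-time Pontryagin (adjoint / Lagrange multiplier) argument to each.

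First I would use the definition of a Nash equilibrium: for every $i\in n$ the sequence $u^{i\star}=(u_t^{i\star})_t$ minimizes $J^i=\sum_{t=1}^{T}g_t^i(x_{t+1},u_t^1,\dots,u_t^N,x_t)$ over $u^i$ when the rivals are frozen at $u_t^{j\star}$, $j\neq i$, subject to $x_{t+1}=f_t(x_t,u_t^1,\dots,u_t^N)$, $x_1^\star=x_1$. With those controls frozen, $f_t$ and $g_t^i$ become time-varying functions of $x_t$ and $u_t^i$ alone, so player $i$ faces a standard single-agent problem, and hypotheses (i)--(ii) ensure all the derivatives below exist.

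Next, for the $i$-th subproblem I would substitute $x_{t+1}=f_t(x_t,u_t)$ into the cost, obtaining a pure running-cost problem with integrand $\tilde g_t^i(x_t,u_t):=g_t^i(f_t(x_t,u_t),u_t,x_t)$, free terminal state, and no terminal term; its stage Hamiltonian $\tilde g_t^i+(p_{t+1}^i)^{T}f_t$ coincides with $H_t^i$ as defined in the statement. Introducing costates $p_2^i,\dots,p_{T+1}^i\in\mathbb R^n$ and the Lagrangian
\begin{equation*}
\mathcal L^i=\sum_{t=1}^{T}\Big[\tilde g_t^i(x_t,u_t)+(p_{t+1}^i)^{T}\big(f_t(x_t,u_t)-x_{t+1}\big)\Big],
\end{equation*}
I would set its first variation with respect to $x_2,\dots,x_{T+1}$ and $u_t^i$ to zero, with $x_1$ fixed. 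Stationarity in $x_{T+1}$ gives the transversality condition $p_{T+1}^i=0$; stationarity in $x_t$ for $2\le t\le T$, after expanding $\partial\tilde g_t^i/\partial x_t$ by the chain rule through $x_{t+1}=f_t$, reproduces exactly the backward recursion for $p_t^i$ in the statement (the $\partial g_t^i/\partial x_{t+1}$ contribution appearing grouped with $p_{t+1}^i$ inside $(\partial f_t/\partial x_t)^{T}[\,\cdot\,]$); and stationarity in $u_t^i$ gives $\partial H_t^i/\partial u_t^i=0$. Since $u_t^{i\star}$ is an actual minimizer of the reduced stage problem and the costate recursion was chosen precisely to absorb the forward state sensitivities $\partial x_s/\partial u_t^i$ ($s>t$), I would upgrade this local condition to the global one $u_t^{i\star}=\arg\min_{u_t}H_t^i$, with a variational inequality replacing the equation when $U\subsetneq\mathbb R^m$. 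Collecting the state equation, the costate recursion, the minimization condition, and $p_{T+1}^i=0$ over all $i\in n$ gives the asserted system.

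The hard part will be the bookkeeping in the variational step, not the ideas: because each stage cost $g_t^i$ depends on \emph{both} $x_t$ and $x_{t+1}$, the summation by parts that eliminates the forward sensitivities must be arranged so that the $\partial g_t^i/\partial x_{t+1}$ terms land inside the bracket multiplying $(\partial f_t/\partial x_t)^{T}$ and the boundary terms collapse exactly onto $p_{T+1}^i=0$ with no spurious condition at $t=1$. Once the adjoint recursion is pinned down this way, the rest is mechanical --- which is why the statement can fairly be quoted without its proof.
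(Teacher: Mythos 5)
The paper offers no proof to compare against: the theorem is quoted as a standard result of dynamic game theory (the reference cited alongside it, \cite{basar1999dynamic}) and is explicitly ``stated without proof to refrain from delving into lengthy standard calculations.'' So the only question is whether your reconstruction is sound, and in outline it is, and it is the same route the cited textbook takes: freeze the rivals at $u_t^{j\star}$ using the Nash property, so player $i$ faces a single-agent problem; substitute the dynamics to form $\tilde g_t^i(x_t,u_t)=g_t^i(f_t(x_t,u_t),u_t,x_t)$; adjoin the constraints with costates; stationarity in $x_{T+1}$ gives $p_{T+1}^i=0$, stationarity in $x_t$ (with the chain rule pushing $\partial g_t^i/\partial x_{t+1}$ through $\partial f_t/\partial x_t$) reproduces the stated adjoint recursion $p_t^i=(\partial f_t/\partial x_t)^T\bigl[p_{t+1}^i+(\partial g_t^i/\partial x_{t+1})^T\bigr]+(\partial g_t^i/\partial x_t)^T$, and stationarity in $u_t^i$ gives $\partial H_t^i/\partial u_t^i=0$. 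I checked this bookkeeping and it matches the theorem exactly.

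The one genuine soft spot is the step you describe as ``upgrading'' the local condition to the global one $u_t^{i\star}=\arg\min_{u_t}H_t^i$. The justification you give (that the costate recursion absorbs the forward sensitivities $\partial x_s/\partial u_t^i$) only delivers the first-order condition; in discrete time, unlike the continuous-time Pontryagin principle, the genuine Hamiltonian-minimization form of the necessary condition does not follow from differentiability alone and requires convexity-type hypotheses (e.g.\ $H_t^i$ convex in $u_t^i$, or $f_t$ affine and $g_t^i$ convex in the controls), a point going back to Halkin's counterexamples. Since the statement as quoted assumes only continuous differentiability, your writeup should either add such a convexity assumption before asserting the $\arg\min$ relation or weaken that relation to $\partial H_t^i/\partial u_t^i=0$; this is a defect inherited from the quoted statement itself rather than a flaw peculiar to your plan, but as written the ``upgrade'' sentence claims more than the argument proves. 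Everything else in the sketch is routine and correct.
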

       At this point we state the thought problem and also attempt to give its solution. As mentioned previously the point of optimality is also the Nash Equilibrium, we give the problem describition as a proposition below.
       \begin{proposition}
       	Consider the discrete time optimal control problem described by the following equations
       	\begin{equation}\label{discrete_time_system}
       	\begin{split}
       	& x_{t+1}=A_{t}x_{t}+B^{i}_{t}u^{i}_{t}+\sum_{j \in n}B^{j}_{t}u^{j \star}_{t} \\ &
       	J(u^{i})=\frac{1}{2}\sum_{t=t}^{T}(x^{T}_{t+1}Q^{i}_{t+1}x_{t+1}+u_{t}^{i T}R_{t}^{i}u_{t}^{i}); \;R^{i}_{t}=R^{i T}_{t} > 0, \; Q^{i}_{t+1}=Q^{i T}_{t+1} \geq 0
       	\end{split} 
       	\end{equation}
       	Then the Nash Equilibrium or equivalently the condition for optimality is given by solution of the set of relations
       	\begin{equation}
       	\begin{split}
       	& u^{i \star}_{t}=-P^{i}_{t}S^{i}_{t+1}A_{t}x_{t}^{\star}-P^{i}_{t}(s^{i}_{t+1}+S^{i}_{t+1}\sum_{i \in n}B^{i}_{t}u^{i \star}_{t}) \\ &
       	P^{i}_{t}=[R^{ii}_{t}+B^{i T}_{t}S^{i}_{t+1}B^{i}_{t}]^{-1}B^{i T}_{t} \\ &
       	S^{i}_{t}=Q^{i}_{t}+A^{T}_{t}S^{i}_{t+1}[I-B^{i}_{t}P^{i}_{t}S^{i}_{t+1}]A_{t}; \; S^{i}_{t+1}=Q^{i}_{t+1} \\ &
       	s^{i}_{t}=A^{T}_{t}[I-B^{i}_{t}P^{i}_{t}S^{i}_{t+1}]^{T}[s^{i}_{t+1}+S^{i}_{t+1}\sum_{i \in n} B^{i}_{t}u^{i \star}_{t}]; \; s^{i}_{t+1}=0; \; i \in n
        \\ &	\text{Furthermore the mimimum cost function is given by}
       	\\ &	 J(u^{i \star}) = \sum_{t=1}^{T}x^{T \star}_{t}A^{T}_{t}Q^{i}_{t+1}\left(-B^{i}_{t}P^{i}_{t}Q^{i}_{t+1}A_{t}x^{\star}_{t}-B^{i}_{t}P^{i}_{t}Q^{i}_{t+1}\left(\sum_{j \in n} B^{j}_{t}u^{j \star}_{t}\right) + \left(\sum_{j \in n} B^{j}_{t}u^{j \star}_{t}\right)\right) \\ &
       	+\frac{1}{2}\sum_{t=1}^{T}\left(x^{T}_{t}A^{T}_{t}+\left(\sum_{j \in n} B^{j}_{t}u^{j \star}_{t}\right)^{T}\right)Q^{i}_{t+1}P^{i T}_{t}\left(B^{i T}_{t}Q^{i}_{t+1}B^{i}_{t}+R^{i}_{t}\right)P^{i}_{t}Q^{i}_{t+1}\left(A_{t}x^{\star}_{t}+\left(\sum_{j \in n} B^{j}_{t}u^{j \star}_{t}\right)\right) \\ & -\sum_{t=1}^{T}\left(x^{T}_{t}A^{T}_{t}+\left(\sum_{j \in n} B^{j}_{t}u^{j \star}_{t}\right)^{T}\right)Q^{i}_{t+1}P^{i T}_{t}B^{i T}_{t}Q^{i}_{t+1}\left(\sum_{j \in n} B^{j}_{t}u^{j \star}_{t}\right)   \\ & + \frac{1}{2}\sum_{t=1}^{T}\left(\sum_{j \in n} B^{j}_{t}u^{j \star}_{t}\right)^{T}Q^{i}_{t+1}\left(\sum_{j \in n} B^{j}_{t}u^{j \star}_{t}\right) + \frac{1}{2}\sum_{t=1}^{T}\left[x^{T \star}A^{T}_{t}Q^{i}_{t+1}A_{t}x^{\star}_{t}\right] 
       	\end{split} 
       	\end{equation} 
       	
       	There is a slight abuse of notation by denoting the end time and the transpose of a matrix  
        by the same letter 'T'
       \end{proposition}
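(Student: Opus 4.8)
The plan is to treat the equilibrium problem for player $i$ as an affine--quadratic optimal control problem in which the rival controls $u^{j\star}_t$ enter only as a known affine forcing term, and to solve it by backward dynamic programming; this is equivalent to invoking the Pontryagin Minimization Principle of Theorem~1 with $f_t(x,u^1,\dots,u^N)=A_t x+\sum_{j}B^j_t u^j$ and $g^i_t=\tfrac12\bigl(x_{t+1}^{T}Q^i_{t+1}x_{t+1}+u^{iT}_t R^i_t u^i_t\bigr)$, so that $H^i_t=g^i_t+p^{iT}_{t+1}f_t$. Writing $\beta_t:=\sum_{j}B^j_t u^{j\star}_t$ for the contribution of the equilibrium controls (the inner index clash in the statement's $\sum_{i\in n}$ is to be read this way, with $T$ kept distinct from transposition as the proposition itself warns), the dynamics seen by player $i$ become $x_{t+1}=A_t x_t+B^i_t u^i_t+\beta_t$, and I would posit for its cost-to-go the quadratic--affine ansatz $V^i_t(x)=\tfrac12 x^{T}S^i_t x+s^{iT}_t x+c^i_t$, with the boundary data forced by the last stage cost to be exactly the ``$S^i_{t+1}=Q^i_{t+1}$, $s^i_{t+1}=0$'' appearing in the statement.

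First I would write the one-step Bellman recursion $V^i_t(x)=\min_{u^i_t}\bigl[\tfrac12(x_{t+1}^{T}Q^i_{t+1}x_{t+1}+u^{iT}_t R^i_t u^i_t)+V^i_{t+1}(x_{t+1})\bigr]$, substitute $x_{t+1}=A_t x_t+B^i_t u^i_t+\beta_t$, and set the gradient in $u^i_t$ to zero. Since $R^i_t>0$ and $Q^i_{t+1}\ge 0$ (and $S^i_{t+1}\ge 0$ inductively), the bracket is strictly convex in $u^i_t$, so its stationary point is the unique minimizer; solving the resulting linear stationarity condition gives $u^{i\star}_t=-P^i_t S^i_{t+1}A_t x^\star_t-P^i_t\bigl(s^i_{t+1}+S^i_{t+1}\beta_t\bigr)$ with $P^i_t=[R^i_t+B^{iT}_t S^i_{t+1}B^i_t]^{-1}B^{iT}_t$, which is the first displayed relation. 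Feeding this feedback back into the Bellman equation and matching, separately, the $x^\star_t$-quadratic, the $x^\star_t$-linear, and the constant terms reproduces the Riccati recursion $S^i_t=Q^i_t+A^{T}_t S^i_{t+1}[I-B^i_t P^i_t S^i_{t+1}]A_t$ and the affine recursion $s^i_t=A^{T}_t[I-B^i_t P^i_t S^i_{t+1}]^{T}[s^i_{t+1}+S^i_{t+1}\beta_t]$; here $[I-B^i_t P^i_t S^i_{t+1}]$ is precisely what is left of $A_t$ after the feedback substitution, it enters transposed in the $s^i_t$ recursion because there it premultiplies a vector paired linearly with $x$, and the two cross-terms that would spoil the match cancel by the elementary identity $[I-B^i_t P^i_t S^i_{t+1}]^{T}S^i_{t+1}B^i_t P^i_t=S^i_{t+1}P^{iT}_t R^i_t P^i_t$ that follows from the definition of $P^i_t$.

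The genuinely delicate point --- and the one I expect to be the main obstacle --- is the inter-player coupling hidden in $\beta_t$: since $\beta_t$ contains every $u^{j\star}_t$, the ``optimal control'' formula above is implicit, and what one really obtains at each $t$ is a linear fixed-point system for the stacked vector $(u^{1\star}_t,\dots,u^{N\star}_t)$, to be resolved once the (rival-independent) $S^i_t$ are known and propagated together with the $s^i_t$ and the trajectory $x^\star_t$. I would close this by showing that the relevant block coefficient matrix (the operator $I+\sum_j P^j_t S^j_{t+1}B^j_t$ acting on the stacked controls, or its affine-recursion analogue) is invertible under a standard non-degeneracy hypothesis, so the simultaneous best responses exist and are unique; simultaneity of the $N$ minimizations is exactly the Nash best-response condition, and optimality of each response is the Pontryagin condition, so the two characterizations agree.

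Finally, for the minimum-cost formula I would substitute the closed-form $u^{i\star}_t$ and the induced trajectory $x^\star_{t+1}=A_t x^\star_t+\beta_t-B^i_t P^i_t\bigl(S^i_{t+1}A_t x^\star_t+s^i_{t+1}+S^i_{t+1}\beta_t\bigr)$ (using $S^i_{t+1}=Q^i_{t+1}$ for the quadratic weight in the stage cost, and taking $s^i_{t+1}=0$) directly into $J(u^{i\star})=\tfrac12\sum_{t=1}^{T}\bigl(x^{\star T}_{t+1}Q^i_{t+1}x^\star_{t+1}+u^{i\star T}_t R^i_t u^{i\star}_t\bigr)$ and expand. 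Collecting the $x^\star_t$--$x^\star_t$, the $x^\star_t$--$\sum_j B^j_t u^{j\star}_t$, and the $\sum_j B^j_t u^{j\star}_t$--$\sum_j B^j_t u^{j\star}_t$ blocks, and simplifying the control contribution with $P^i_t\bigl(R^i_t+B^{iT}_t Q^i_{t+1}B^i_t\bigr)P^{iT}_t=P^i_t B^{iT}_t$ (immediate from the definition of $P^i_t$), reproduces the six sums in the stated expression term by term. I expect no conceptual difficulty in this last step, only careful bookkeeping, with the understanding that the index clash in the inner $\sum_{i\in n}$ and the dual use of the symbol $T$ are resolved as above.
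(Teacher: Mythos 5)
Your route is sound and would establish the stated relations, but it is genuinely different from the paper's. You run a backward dynamic-programming argument: a quadratic--affine cost-to-go ansatz $V^i_t(x)=\tfrac12x^TS^i_tx+s^{iT}_tx+c^i_t$, strict convexity in $u^i_t$ from $R^i_t>0$, and term-by-term matching to actually \emph{derive} the recursions for $S^i_t$ and $s^i_t$, plus an explicit treatment of the implicit coupling through $\sum_j B^j_tu^{j\star}_t$ as a linear fixed-point system across players. The paper instead works purely with the Hamiltonian of its Pontryagin theorem: it writes $H^i_t$, takes $\partial H^i_t/\partial u^i_t=0$, and then kills the costate by invoking the transversality condition $p^i_{t+1}=0$ at every stage (not just at $t=T$), after which the feedback law follows and the identifications $S^i_{t+1}=Q^i_{t+1}$, $s^i_{t+1}=0$, $P^i_t=[R^i_t+B^{iT}_tQ^i_{t+1}B^i_t]^{-1}B^{iT}_t$ are simply read off ``from the definitions''; the Riccati and affine recursions are never derived there, and the minimum cost is obtained, as in your last step, by substituting $u^{i\star}_t$ into $J$ and expanding. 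So your proposal buys a genuine derivation of the recursions and of the Nash fixed-point structure that the paper glosses over, at the price of more bookkeeping; the paper's computation is shorter but is really only the single-stage (terminal) case propagated by fiat. Two cosmetic points to fix when writing yours out: the boundary data require the kernel $S^i_t$ to absorb the stage weight (i.e.\ take $W_t(x)=\tfrac12x^TQ^i_tx+V^i_t(x)$, so $W_{T+1}$ has kernel $Q^i_{T+1}$, rather than putting $Q^i_{t+1}$ directly into the cost-to-go), and your auxiliary identity is dimensionally garbled as written --- the correct form is $P^{iT}_t\bigl(R^i_t+B^{iT}_tQ^i_{t+1}B^i_t\bigr)P^i_t=B^i_tP^i_t$, the transposed analogue of the (correct) cancellation identity $[I-B^i_tP^i_tS^i_{t+1}]^TS^i_{t+1}B^i_tP^i_t=S^i_{t+1}P^{iT}_tR^i_tP^i_t$ you use in the Riccati matching.
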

       \begin{proof}
       	From the \textit{Minimization Theorem} we have the associated Hamiltonian function as
       	\begin{equation}
       	\begin{split}
       	H^{i}_{t}& =\frac{1}{2}\sum_{t=1}^{T}[x^{T}_{t+1}Q^{i}_{t+1}x_{t+1}+u^{i T}_{t}R^{i}_{t}u^{i}_{t}]+p_{t+1}^{i T}\left(A_{t}x_{t}+B^{i}_{t}u_{t}^{i}+\left(\sum_{j \in n} B^{j}_{t}u^{j \star}_{t}\right)\right) \\ &
       	=\frac{1}{2}\sum_{t=1}^{T}\left[\left(A_{t}x_{t}+B^{i}_{t}u^{i}_{t}+ \sum_{j \in n}B^{j}_{t}u^{j}_{t}\right)^{T}Q^{i}_{t+1}\left(A_{t}x_{t}+B^{i}_{t}u^{i}_{t}+ \sum_{j \in n}B^{j}_{t}u^{j}_{t}\right)+u^{i T}_{t}R^{i}_{t}u^{i}_{t}\right] \\ & +p_{t+1}^{i T}\left(A_{t}x_{t}+B^{i}_{t}u_{t}^{i}+\left(\sum_{j \in n} B^{j}_{t}u^{j \star}_{t}\right)\right)
       	\end{split} 
       	\end{equation}
       	\begin{equation}
       	\begin{split}
       	-p_{t}^{i T}& =\frac{\partial H_{t}^{i}}{\partial x_{t}}=\left(A_{t}x_{t}\right)^{T}Q^{i}_{t+1}A_{t}+\left(B^{i}_{t}u^{i}_{t}+\left(\sum_{j \in n} B^{j}_{t}u^{j \star}_{t}\right)\right)^{T}Q^{i}_{t+1}A_{t}+p_{t+1}^{i T}A_{t} \\ &  =x^{T}_{t}A^{T}_{t}Q^{i}_{t+1}A_{t}+\left(u^{i T}_{t}B^{i T}_{t}+\left(\sum_{j \in n} B^{j}_{t}u^{j \star}_{t}\right)^{T}\right)Q^{i}_{t+1}A_{t}+p^{i T}_{t+1}A_{t}
       	\end{split} 
       	\end{equation}
       	Also,
       	\begin{equation}
       	\begin{split}
       	0 & = \frac{\partial H^{i}_{t}}{\partial u^{i \star}_{t}} \\ &=\left(B^{i}_{t}u^{i}_{t}\right)^{T}Q^{i}_{t+1}B^{i}_{t}+\left(x^{T}_{t}A^{T}_{t}+\left(\sum_{j \in n} B^{j}_{t}u^{j \star}_{t}\right)^{T}\right)Q^{i}_{t+1}B^{i}_{t}+u^{i T}_{t}R^{i}_{t}+p^{i T}_{t+1}B^{i}_{t} \\ &
       	= u^{i T}_{t}B^{i T}_{t}Q^{i}_{t+1}B^{i}_{t}+\left(x^{T}_{t}A^{T}_{t}+\left(\sum_{j \in n} B^{j}_{t}u^{j \star}_{t}\right)^{T}\right)Q_{t+1}B^{i}_{t}+u^{i T}_{t}R_{t}+p^{i T}_{t+1}B^{i}_{t}
       	\end{split} 
       	\end{equation}
       	Since $p^{i T}_{t+1}=0$ we have,
       	\begin{equation}\label{control_to_be_replaced}
       	\begin{split}
       	& u^{i T}_{t}\left(B^{i T}_{t}Q^{i}_{t+1}B^{i}_{t}+R^{i}_{t}\right)+x^{T}_{t}A^{T}_{t}Q^{i}_{t+1}B^{i}_{t}+\left(\sum_{j \in n} B^{j}_{t}u^{j \star}_{t}\right)^{T}Q^{i}_{t+1}B^{i}_{t}=0 
       	\end{split}
       	\end{equation}
       	Taking the transpose of the above equation we have,
       	\begin{equation}
       	\begin{split}
       	B^{iT}_{t}Q^{iT}_{t+1}B^{i}_{t}u^{i}_{t}+B^{iT}_{t}Q^{iT}_{t+1}\left[A_{t}x_{t}+\left(\sum_{j \in n}B^{j}_{t}u^{j \star}_{t}\right)\right]+R^{iT}_{t}u^{i}_{t}=0
       	\end{split}
       	\end{equation}
       	Simplifying and solving for $u^{i}_{t}$ we have
       	\begin{equation}
       	\begin{split}
       	u^{i}_{t}=-\left[R^{i T}_{t}+B^{i T}_{t}Q^{i}_{t+1}B^{i}_{t}\right]^{-1}B^{i T}_{t}\left[Q^{i}_{t+1}A_{t}x_{t}+Q^{i}_{t+1}\left(\sum_{j \in n} B^{j}_{t}u^{j \star}_{t}\right)\right]
       	\end{split} 
       	\end{equation}
       	This gives the optimal control for the player $i$. So we write it as
       	\begin{equation}
       	\begin{split}
       	& u^{i \star}_{t}=-\left[R^{i T}_{t}+B^{i T}_{t}Q^{i}_{t+1}B^{i}_{t}\right]^{-1}B^{i T}_{t}\left[Q^{i}_{t+1}A_{t}x^{\star}_{t}+Q^{i}_{t+1}\left(\sum_{j \in n} B^{j}_{t}u^{j \star}_{t}\right)\right] \\ &
       	\text{Replacing the value of $P^{i}_{t}=\left[R^{i T}_{t}+B^{i T}_{t}Q^{i}_{t+1}B^{i}_{t}\right]^{-1}B^{i T}_{t}$, $Q^{i}_{t+1}=S^{i}_{t+1}$, $s^{i}_{t+1}=0$ from} \\ & \text{the definitions we have,} \\ &
       	u^{i \star}_{t}=-P^{i}_{t}\left[S^{i}_{t+1}A_{t}x^{\star}_{t}+S^{i}_{t+1}\left(\sum_{j \in n} B^{j}_{t}u^{j \star}_{t}\right)+s^{i}_{t+1}\right] \\ &
       	u^{i \star}_{t}=-P^{i}_{t}S^{i}_{t+1}A_{t}x^{\star}_{t}-P^{i}_{t}\left(s^{i}_{t+1}+S^{i}_{t+1}\left(\sum_{j \in n} B^{j}_{t}u^{j \star}_{t}\right)\right)
       	\\ &
       	\text{Now we go ahead to find the minimum cost for player $i$. We have,}
       	\\ &
       	 J(u^{i \star}) =\frac{1}{2}\sum_{t=1}^{T}(x^{T \star}_{t+1}Q^{i}_{t+1}x^{\star}_{t+1}+u^{i T \star}_{t}R^{i}_{t}u^{i \star}_{t}) \\ &
       	\text{Replacing the value of $x^{\star}_{t+1}$ from the state updation law we have the above expression as,} \\ &
       	=\frac{1}{2}\sum_{t=1}^{T}[\left(x^{T \star}_{t}A^{T}_{t}+u^{i T \star}_{t}B^{T}_{t}+\left(\sum_{j \in n} B^{j}_{t}u^{j \star}_{t}\right)^{T}\right)Q^{i}_{t+1}\left(A_{t}x^{\star}_{t}+B^{i}_{t}u^{i \star}_{t}+\left(\sum_{j \in n} B^{j}_{t}u^{j \star}_{t}\right)\right) \\ & +u^{i T \star}_{t}R^{i}_{t}u^{i \star}_{t}] \\ &
       	=\frac{1}{2}\sum_{t=1}^{T}[x^{T \star}_{t}A^{T}_{t}Q_{t+1}^{i}A_{t}x^{\star}_{t}+2x^{T \star}_{t}A^{T}_{t}Q^{i}_{t+1}\left(B^{i}_{t}u^{i \star}_{t}+\left(\sum_{j \in n} B^{j}_{t}u^{j \star}_{t}\right)\right) \\ & +\left(u^{i T \star}_{t}B^{i T}_{t}+\left(\sum_{j \in n} B^{j}_{t}u^{j \star}_{t}\right)^{T}\right)Q^{i}_{t+1}\left(B^{i}_{t}u^{i \star}_{t}+\left(\sum_{j \in n} B^{j}_{t}u^{j \star}_{t}\right)\right)+u^{i T \star}_{t}R^{i}_{t}u^{i \star}_{t}]
       	\end{split} 
       	\end{equation}
       	Puting the value of the solved $u^{i \star}_{t}$ in the avove equation we have $J(u^{i \star})$ as
       	\begin{equation}
       	\begin{split}
       	& = \frac{1}{2}\sum_{t=1}^{T}\left[x^{T \star}A^{T}_{t}Q^{i}_{t+1}A_{t}x^{\star}_{t}\right] \\ & +\sum_{t=1}^{T}x^{T \star}_{t}A^{T}_{t}Q^{i}_{t+1}\left(-B^{i}_{t}P^{i}_{t}Q^{i}_{t+1}A_{t}x^{\star}_{t}-B^{i}_{t}P^{i}_{t}Q^{i}_{t+1}\left(\sum_{j \in n} B^{j}_{t}u^{j \star}_{t}\right) + \left(\sum_{j \in n} B^{j}_{t}u^{j \star}_{t}\right)\right) \\ &
       	+\frac{1}{2}\sum_{t=1}^{T}\left(x^{\star T}_{t}A^{T}_{t}Q^{i}_{t+1}P^{i T}_{t}+\left(\sum_{j \in n} B^{j}_{t}u^{j \star}_{t}\right)^{T}Q_{t+1}P^{i T}_{t+1}\right)\left(B^{i T}_{t}Q^{i}_{t+1}B^{i}_{t}+R^{i}_{t}\right) \\ & \left(P^{i}_{t}Q^{i}_{t+1}A_{t}x^{\star}_{t}+P^{i}_{t}Q^{i}_{t+1}\left(\sum_{j \in n} B^{j}_{t}u^{j \star}_{t}\right)\right) \\ & -\sum_{t=1}^{T}\left(x^{\star T}_{t}A^{T}_{t}Q^{i}_{t+1}P^{i T}_{t}+\left(\sum_{j \in n} B^{j}_{t}u^{j \star}_{t}\right)^{T}Q^{i}_{t+1}P^{i T}_{t}\right)B^{i T}_{t}Q^{i}_{t+1}\left(\sum_{j \in n} B^{j}_{t}u^{j \star}_{t}\right)   \\ & + \frac{1}{2}\sum_{t=1}^{T}\left(\sum_{j \in n} B^{j}_{t}u^{j \star}_{t}\right)^{T}Q^{i}_{t+1}\left(\sum_{j \in n} B^{j}_{t}u^{j \star}_{t}\right)
       	\end{split} 
       	\end{equation}
       	which on simplifying produces the result in the statement of the problem.
       \end{proof}
       Coming back to the discussion of Discrete Time $H^{\infty}$ control and matrix Riccati equations we attempt to provide a solution of equations (\ref{Riccati_S}) and (\ref{Riccati_Sigma}). Thereafter we try to find the values of $\gamma$ satisfying these two equations, this problem is dealt numerically. 
       \section{Solution of matrix Riccati equations} 
       For the sake of convenience we again present the neccessary matrix equations to solve equations (\ref{Riccati_S}) and (\ref{Riccati_Sigma}) in this section. As we deal with Infinite Horizon case the stationary versions of all the equations in the previous subsection are obtained
       by removing the index $t$ or $t+1$ to all matrix-valued symbols. Rewriting the necessary matrix equations in time independent form we have,
       \begin{equation}
       \begin{split}
       \bar{A} = A-BR^{-1}P^{T} \; , \; \tilde{A}=A-L^{T}N^{-1}C
       \end{split} 
       \end{equation}
       where $R=G^{T}G$, $P=H^{T}G$, $L=ED^{T}$, $N=EE^{T}$. \\
       Along with the additional notations
       \begin{equation}
       \begin{split}
       & \Gamma=(S^{-1}+BR^{-1}B^{T}-\gamma^{-2}M)^{-1} \\ &
       \bar{S}=\bar{A}^{T}(S-\gamma^{-2}M)^{-1}\bar{A}+Q-PR^{-1}P^{T} \\ &
       \Delta=(\Sigma^{-1}+C^{T}NC-\gamma^{-2}Q)^{-1} \\ &
       \tilde{\Sigma}=\tilde{A}(\Sigma^{-1}-\gamma^{-2}Q)^{-1}\tilde{A}^{T}+M-L^{T}N^{-1}L \\ &
       \end{split} 
       \end{equation}
       Then the two matrix Riccati equations needed for our purpose can be written as 
       \begin{equation}\label{Riccati_S_stationary}
       \begin{split}
       S=\bar{A}^{T}\Gamma\bar{A}+Q-PR^{-1}P^{T}
       \end{split}
       \end{equation}
       \begin{equation}\label{Riccati_Sigma_stationary}
       \begin{split}
       \Sigma=\tilde{A}\Delta\tilde{A}^{T}+M-L^{T}N^{-1}L
       \end{split} 
       \end{equation}
       where $Q=H^{T}H$, $M=DD^{T}$ \\
       A tedious calculation leads us to the solution of equations (\ref{Riccati_S_stationary}) and (\ref{Riccati_Sigma_stationary}) given by
       \begin{equation}\label{Inverse_S_solution}
       \begin{split}
       [A-BG^{-1}H]S^{-1}[A-BG^{-1}H]^{T}=S^{-1}+(BG^{-1})(BG^{-1})^{T}-\gamma^{-2}DD^{T}
       \end{split} 
       \end{equation} 
       \begin{equation}\label{Inverse_Sigma_solution}
       \begin{split}
       [A-DE^{-1}C]^{T}\Sigma^{-1}[A-DE^{-1}C]=\Sigma^{-1}+(E^{-1}C)^{T}(E^{-1}C)-\gamma^{-2}H^{T}H
       \end{split} 
       \end{equation}
       As the above two equations are not easily solvable symbolically we tame the problem numerically by assuming numerical values for the matrices. For our purpose we take the matrices as
       \begin{equation}
       \begin{split}
       A=\begin{bmatrix} 1 & 0 \\ 0 & 1 \end{bmatrix}, \; B=C=D=E=G=H=\begin{bmatrix} 0 & 1 \\ 1 & 0 \end{bmatrix}
       \end{split} 
       \end{equation}
       Puting the above values in equations (\ref{Inverse_S_solution}) and (\ref{Inverse_Sigma_solution}) we have,
       \begin{equation}
       \begin{split}
       & \begin{bmatrix} 1 & -1 \\ -1 & 1 \end{bmatrix}S^{-1}{\begin{bmatrix} 1 & -1 \\ -1 & 1 \end{bmatrix}}^{T}=S^{-1}+\begin{bmatrix} 1 & 0 \\ 0 & 1 \end{bmatrix}[1-\gamma^{-2}] 
       \end{split} 
       \end{equation}
       Let $S^{-1}=\begin{bmatrix} a & b \\ c & d \end{bmatrix}$. Solving the above equation we get
       \begin{equation*}
       \begin{split}
       a=\frac{1}{3}[\gamma^{-2}-1] , b=\frac{2}{3}[\gamma^{-2}-1] , c= \frac{2}{3}[\gamma^{-2}-1] , d=\frac{1}{3}[\gamma^{-2}-1]
       \end{split}
       \end{equation*}
       Therefore $S^{-1}= \frac{\gamma^{-2}-1}{3} \begin{bmatrix} 1 & 2 \\ 2 & 1 \end{bmatrix}$ and $S=\frac{3}{\gamma^{-2}-1} \begin{bmatrix} -1 & 2 \\ 2 & -1 \end{bmatrix}$.
       Similarly solution for $\Sigma$ gives $\Sigma^{-1}= \frac{\gamma^{-2}-1}{3} \begin{bmatrix} 1 & 2 \\ 2 & 1 \end{bmatrix}$ and $\Sigma=\frac{3}{\gamma^{-2}-1} \begin{bmatrix} -1 & 2 \\ 2 & -1 \end{bmatrix}$. \\
       To arrive at a range for $\gamma$ we have the following conditions
       \begin{equation}
       \begin{split}
       & \rho(MS) < \gamma^{2} \; \text{and} \; \rho(\tilde{\Sigma}S) < \gamma^{2} \\ &
       \text{or} \\ &
       \rho(\Sigma Q) < \gamma^{2} \; \text{and} \; \rho(\Sigma\bar{S}) < \gamma^{2}
       \end{split} 
       \end{equation}
       where '$\rho$' stands for spectral radius or the largest eigenvalue of the matrices in the argument.
       \subsection{Calculation of Spectral Radius}
       \begin{equation}
       \begin{split}
       & \rho(MS)=\rho\left(\frac{1}{\gamma^{-2}-1}\begin{bmatrix} -1 & 2 \\ 2 & -1 \end{bmatrix} \right)=\frac{3}{1-\gamma^{-2}} \\ &
       \rho(\tilde{\Sigma}S)=\rho\left(\frac{18}{(1-\gamma^{-2})(1-4\gamma^{-2})} \begin{bmatrix} 1 & -1 \\ -1 & 1 \end{bmatrix}\right)=\frac{18}{(1-4\gamma^{-2})(1-\gamma^{-2})} \\ &
       \rho(\Sigma Q)= \rho\left(\frac{1}{\gamma^{-2}-1}\begin{bmatrix} -1 & 2 \\ 2 & -1 \end{bmatrix}\right)=\frac{3}{1-\gamma^{-2}} \\ &
       \rho(\Sigma \bar{S})=\rho\left(\frac{6(\gamma^{-4}-\gamma^{-2}-1)}{(1-\gamma^{-2}+\gamma^{-4})^{2}-4}\begin{bmatrix} 1 & -1 \\ -1 & 1 \end{bmatrix}\right)=\frac{6(\gamma^{-4}-\gamma^{-2}-1)}{(1-\gamma^{-2}+\gamma^{-4})^{2}-4}
       \end{split} 
       \end{equation}
       For  $\gamma$ to satisfy the Riccati equations we have,
       \begin{equation}\label{Gamma_values_1}
       \begin{split}
       & \rho(MS) < \gamma^{2} \; \text{or} \; \frac{3}{1-\gamma^{-2}} < \gamma^{2} \\ & \text{and} \\ &
       \rho(\tilde{\Sigma}S) < \gamma^{2} \; \text{or} \; \frac{18}{(1-4\gamma^{-2})(1-\gamma^{-2})}< \gamma^{2}
       \end{split}
       \end{equation}
       OR
       \begin{equation}\label{Gamma_values_2}
       \begin{split}
      & \rho(\Sigma Q) < \gamma^{2} \; \text{or} \; \frac{3}{1-\gamma^{-2}} < \gamma^{2} \\ & \text{and} \\ & \rho(\Sigma \bar{S}) < \gamma^{2} \; \text{or} \; \frac{6(\gamma^{-4}-\gamma^{-2}-1)}{(1-\gamma^{-2}+\gamma^{-4})^{2}-4} < \gamma^{2}
       \end{split} 
       \end{equation}
       From the first equation of equation (\ref{Gamma_values_1}) for $\gamma > 0$ we have  $\gamma > 2$ and from second equation of equation (\ref{Gamma_values_1}) for $\gamma > 0$ we have $\gamma > 4.78$ or $ \gamma > .42$. Combining the above conditions we have $\gamma > 4.78$  or $\gamma > 2$\\
       From the first equation of equation (\ref{Gamma_values_2}) for $\gamma > 0$ we have  $\gamma > 2$ and from second equation of equation (\ref{Gamma_values_2}) for $\gamma > 0$ we have $\gamma < 1.48$. Combining we do not get any range for $\gamma$.  
       \section{Conclusion and problems in future}
        We try to conclude the article with a brief discussion and open problems. In our work we have revealed a new face for differential calculus where it is shown the well known properties of conventional matrix product could still be applied while doing the differential calculus for Semi-Tensor product(STP) of matrices. While the rich ideas of Boolean Calculus\cite{cheng2011matrix} are still present we have shown that(STP) formalism can also be a new addition in the existing ideas of matrix differential calculus and it can be applied to Boolean networks. While the ideas of STP formalism are still developing our technique can be a new addition in the existing formalism. This may open up new research avenues in matrix differential calculus. The idea of implementing the Markov Decision Process theory in Boolean Networks is an absolutely new addition. While deriving the optimal control for the Boolean networks we have shown that at optimal condition the system falls into a cycle where the optimal control values and the state trajectory values are repeated at each discrete time. This is a new finding on which further research can be carried out. In our second part of the discussion that is \textit{Game Based Control} we have derived the optimal control and the optimal cost in a thought problem through Pontryagin Minimization Principle. While dealing with $H^{\infty}$ optimal control for discrete time systems for infinite horizon problem involving the time independent matrices, we showed an absolutely new technique for solving the equations by considering delta variations of the state equation and eventually replacing the delta variations by differentials which made the equations much easier to handle. Also the matrix Riccati equations that came up were solved successfully by considering special values of the matrices. Finally we derive a range for the \textit{attenuation level} $\gamma$ and draw an ending line on the tedious calculations.

	\bibliographystyle{unsrt}
	\bibliography{library}

\begin{thebibliography}{10}

\bibitem{cheng2014semi}
Daizhan Cheng, Hongsheng Qi, Fehuang He, Tingting Xu, and Hairong Dong.
\newblock Semi-tensor product approach to networked evolutionary games.
\newblock {\em Control Theory and Technology}, 12(2):198--214, 2014.

\bibitem{cheng2015modeling}
Daizhan Cheng, Fenghua He, Hongsheng Qi, and Tingting Xu.
\newblock Modeling, analysis and control of networked evolutionary games.
\newblock {\em IEEE Transactions on Automatic Control}, 60(9):2402--2415, 2015.

\bibitem{cheng2010analysis}
Daizhan Cheng, Hongsheng Qi, and Zhiqiang Li.
\newblock {\em Analysis and control of Boolean networks: a semi-tensor product
  approach}.
\newblock Springer Science \& Business Media, 2010.

\bibitem{cheng2010linear}
Daizhan Cheng and Hongsheng Qi.
\newblock A linear representation of dynamics of boolean networks.
\newblock {\em IEEE Transactions on Automatic Control}, 55(10):2251--2258,
  2010.

\bibitem{li2011structure}
Z~Li, Yin Zhao, and D~Cheng.
\newblock Structure of higher order boolean networks.
\newblock {\em J. Grad. School, the Chinese Acad. Sci.}, 2011.

\bibitem{cheng2009input}
Daizhan Cheng.
\newblock Input-state approach to boolean networks.
\newblock {\em IEEE Transactions on Neural Networks}, 20(3):512--521, 2009.

\bibitem{cheng2011stability}
Daizhan Cheng, Hongsheng Qi, Zhiqiang Li, and Jiang~B Liu.
\newblock Stability and stabilization of boolean networks.
\newblock {\em International Journal of Robust and Nonlinear Control},
  21(2):134--156, 2011.

\bibitem{zhang2012multi}
Zhenning Zhang and Daizhan Cheng.
\newblock Multi-agent competitive control systems.
\newblock In {\em Proceedings of the 10th World Congress on Intelligent Control
  and Automation}, pages 2263--2267. IEEE, 2012.

\bibitem{li2010algebraic}
Zhiqiang Li and Daizhan Cheng.
\newblock Algebraic approach to dynamics of multivalued networks.
\newblock {\em International Journal of Bifurcation and Chaos},
  20(03):561--582, 2010.

\bibitem{cheng2005semi}
Daizhan Cheng.
\newblock Semi-tensor product of matrices and its applications to dynamic
  systems.
\newblock In {\em New Directions and Applications in Control Theory}, pages
  61--79. Springer, 2005.

\bibitem{qi2014networked}
Hongsheng Qi, Daizhan Cheng, and Hairong Dong.
\newblock On networked evolutionary games part 1: Formulation.
\newblock {\em IFAC Proceedings Volumes}, 47(3):275--280, 2014.

\bibitem{cheng2014networked}
Daizhan Cheng, Fehuang He, and Tingting Xu.
\newblock On networked evolutionary games part 2: Dynamics and control.
\newblock {\em IFAC Proceedings Volumes}, 47(3):281--286, 2014.

\bibitem{cheng2003semi}
Daizhan Cheng, Yali Dong, et~al.
\newblock Semi-tensor product of matrices and its some applications to physics.
\newblock {\em Methods and Applications of Analysis}, 10(4):565--588, 2003.

\bibitem{cheng2010strategy}
Daizhan Cheng, Yin Zhao, and Yifen Mu.
\newblock Strategy optimization with its application to dynamic games.
\newblock In {\em 49th IEEE Conference on Decision and Control (CDC)}, pages
  5822--5827. IEEE, 2010.

\bibitem{limebeer1992game}
David~JN Limebeer, Brian~DO Anderson, Pramod~P Khargonekar, and Michael Green.
\newblock A game theoretic approach to h infinity control for time-varying
  systems.
\newblock {\em SIAM Journal on Control and Optimization}, 30(2):262--283, 1992.

\bibitem{bernhard1991lecture}
Pierre Bernhard.
\newblock A lecture on the game theoretic approach to h infinity optimal
  control.
\newblock {\em Preprint}, 1991.

\bibitem{basar1991dynamic}
Tamer Basar.
\newblock A dynamic games approach to controller design: disturbance rejection
  in discrete-time.
\newblock {\em IEEE transactions on automatic control}, 36(8):936--952, 1991.

\bibitem{bacsar1989disturbance}
Tamer Ba{\c{s}}ar.
\newblock Disturbance attenuation in lti plants with finite horizon: Optimality
  of nonlinear controllers.
\newblock {\em Systems \& control letters}, 13(3):183--191, 1989.

\bibitem{bacsar1989differential}
Tamer Ba{\c{s}}ar and Pierre Bernhard.
\newblock {\em Differential games and applications}.
\newblock Springer, 1989.

\bibitem{haurie2005dynamic}
Alain Haurie and Georges Zaccour.
\newblock {\em Dynamic games: theory and applications}, volume~10.
\newblock Springer Science \& Business Media, 2005.

\bibitem{bacsar1984theory}
T~Ba{\c{s}}ar.
\newblock Theory of dynamic games and its applications in large scale systems
  design and optimization.
\newblock {\em IFAC Proceedings Volumes}, 17(2):1127--1131, 1984.

\bibitem{francis1987course}
Bruce~A Francis.
\newblock {\em A course in H [infinity] control theory}.
\newblock Berlin; New York: Springer-Verlag, 1987.

\bibitem{doyle1989state}
John~C Doyle, Keith Glover, Pramod~P Khargonekar, and Bruce~A Francis.
\newblock State-space solutions to standard h/sub 2/and h/sub infinity/control
  problems.
\newblock {\em IEEE Transactions on Automatic control}, 34(8):831--847, 1989.

\bibitem{bacsar2008h}
Tamer Ba{\c{s}}ar and Pierre Bernhard.
\newblock {\em H-infinity optimal control and related minimax design problems:
  a dynamic game approach}.
\newblock Springer Science \& Business Media, 2008.

\bibitem{vrabie2013optimal}
Draguna Vrabie, Kyriakos~G Vamvoudakis, and Frank~L Lewis.
\newblock {\em Optimal adaptive control and differential games by reinforcement
  learning principles}, volume~2.
\newblock IET, 2013.

\bibitem{zhao2010optimal}
Yin Zhao, Zhiqiang Li, and Daizhan Cheng.
\newblock Optimal control of logical control networks.
\newblock {\em IEEE Transactions on Automatic Control}, 56(8):1766--1776, 2010.

\bibitem{pan1995h}
Zigang Pan and Tamer Ba{\c{s}}ar.
\newblock H infinity control of markovian jump systems and solutions to
  associated piecewise-deterministic differential games.
\newblock In {\em New trends in dynamic games and applications}, pages 61--94.
  Springer, 1995.

\bibitem{abouheaf2014multi}
Mohammed~I Abouheaf, Frank~L Lewis, Kyriakos~G Vamvoudakis, Sofie Haesaert, and
  Robert Babuska.
\newblock Multi-agent discrete-time graphical games and reinforcement learning
  solutions.
\newblock {\em Automatica}, 50(12):3038--3053, 2014.

\bibitem{basar1999dynamic}
Tamer Basar and Geert~Jan Olsder.
\newblock {\em Dynamic noncooperative game theory}, volume~23.
\newblock Siam, 1999.

\bibitem{bernstein1989lqg}
Dennis~S Bernstein and Wassim~M Haddad.
\newblock Lqg control with an h/sup infinity/performance bound: a riccati
  equation approach.
\newblock {\em IEEE Transactions on Automatic Control}, 34(3):293--305, 1989.

\bibitem{bacsar2017riccati}
Tamer Ba{\c{s}}ar and Jun Moon.
\newblock Riccati equations in nash and stackelberg differential and dynamic
  games.
\newblock {\em IFAC-PapersOnLine}, 50(1):9547--9554, 2017.

\bibitem{isaacs1999differential}
Rufus Isaacs.
\newblock {\em Differential games: a mathematical theory with applications to
  warfare and pursuit, control and optimization}.
\newblock Courier Corporation, 1999.

\bibitem{basar1985dynamic}
Tamer Basar.
\newblock Dynamic games and incentives.
\newblock In {\em Systems and optimization}, pages 1--13. Springer, 1985.

\bibitem{basar2018handbook}
Tamer Basar and Georges Zaccour.
\newblock {\em Handbook of Dynamic Game Theory}.
\newblock Springer, 2018.

\bibitem{bernhard2016robust}
Pierre Bernhard.
\newblock Robust control and dynamic games.
\newblock {\em Handbook of Dynamic Game Theory}, pages 1--30, 2016.

\bibitem{cheng2011matrix}
Daizhan Cheng, Yin Zhao, and Xiangru Xu.
\newblock Matrix approach to boolean calculus.
\newblock In {\em 2011 50th IEEE Conference on Decision and Control and
  European Control Conference}, pages 6950--6955. IEEE, 2011.

\end{thebibliography}
\end{document}